\newcommand{\vertiii}[1]{{\left\vert\kern-0.25ex\left\vert\kern-0.25ex\left\vert #1 
    \right\vert\kern-0.25ex\right\vert\kern-0.25ex\right\vert}}
 \journalname{}
\begin{document}

\title{Approximation approach to the fractional BVP with the Dirichlet type boundary conditions
}

\titlerunning{Approximation approach to the FBVP...}        

\author{Kateryna Marynets* \and Dona Pantova        
}

\authorrunning{K. Marynets \and D. Pantova} 

\institute{\textbf{Corresponding author*:}\\ Kateryna Marynets \at
              Delft Institute of Applied Mathematics, Faculty of Electrical Engineering, Mathematics and Computer  Science,  Delft  University  of  Technology,  Mekelweg 4, 2628 CD Delft,  The~Netherlands  \\
\email{K.Marynets@tudelft.nl}           
     \and Dona Pantova \at
              Delft Institute of Applied Mathematics, Faculty of Electrical Engineering, Mathematics and Computer  Science, Delft  University  of  Technology,  Mekelweg 4, 2628 CD Delft,  The~Netherlands \\
             \email{D.H.Pantova@tudelft.nl}           
           }
\date{Received: date / Accepted: date}

\maketitle

\begin{abstract}
We use a numerical-analytic technique to construct a sequence of successive approximations to the solution of a system of fractional differential equations, subject to Dirichlet boundary conditions. We prove the uniform convergence of the sequence of approximations to a limit function, which is the unique solution to the boundary value problem under consideration, and give necessary and sufficient conditions for the existence of solutions. The obtained theoretical results are confirmed by a model example.

\keywords{Fractional differential equations \and Dirichlet boundary conditions \and Approximation of solutions \and Brouwer degree}
\subclass{34A08 \and 34K07 \and 34K28}
\end{abstract}

\section{Introduction}\label{sec1}

\,\,\,\,\, The topic of fractional differential equations (FDEs) has become an active area of research over the past several decades. The study of existence and uniqueness of solutions, and of the evolution of systems described by FDEs is of theoretical, as well as practical interest to mathematicians and scientists who aim to model the behaviour of complex dynamical systems. The main advantage of fractional calculus operators is in their ability to capture non-local and long-term memory effects \cite{podbulny,Herrmann}. This property allows the development of more realistic models using FDEs for complex phenomena, such as anomalous diffusion, the behaviour of viscoelastic materials, transport properties, and fluid flows \cite{kilbas}. Generally, real-world processes are non-linear, and thus described by FDEs containing non-linearities, the exact solutions to which are often not possible to obtain. This has motivated the development of approximate solution methods, such as the numerical-analytic method, which combines deriving an approximate solution in analytic form with the numerical calculation of the parameters describing the solution's behaviour.

In this paper, we apply a numerical-analytic technique, which was originally developed for approximating the solutions to periodic boundary value problems (BVPs) for ordinary differential equations \cite{ronsam}, and later on adapted for FDEs (see e.g. \cite{Marynets2}), to the study of the solvability and constructive approximation of solutions for systems of FDEs of the Caputo type with Dirichlet boundary conditions. We derive integral equations which give the solution to the initial value problem (IVP), corresponding to the original BVP, and construct a sequence of functions, depending on a vector-parameter, which is found as a root of the so-called determining system of algebraic equations. We prove the uniform convergence of the sequence of functions to a limit function, and show the relationship between the limit function and the original BVP. Finally, we prove two results on the necessary and sufficient conditions for the existence of solutions of the BVP. 

The obtained theoretical results and the effectiveness of the developed technique are confirmed on an example of the gyre equation for the Antarctic Circumpolar Current considered in the fractional setting (for more details about the mathematical model of the Antarctic Circumpolar Current we refer to \cite{ConJohn,Q,marynets1}).

\section{Problem Setting}\label{sec2}

 In this paper, we consider a BVP for a FDS of the form 

\begin{equation}
\label{fds}
    {^C_0}{D}^{p}_{t} u(t) = f(t, u(t))
\end{equation}
for some $p \in (1,2]$, and subjected to the non-homogeneous Dirichlet boundary conditions 
\begin{align}
\label{bcs}
u(0) = \alpha_{1}, \,\,\, u(T) = \alpha_{2},
\end{align}
where ${^C_0}{D}^{p}_{t}$ is the Caputo fractional derivative (see \cite{podbulny}, Def. 2.138) with lower limit at $0$, $t \in [0,T]$, $u: [0, T] \rightarrow D$, $f: G \rightarrow \mathbb{R}^{n}$ are continous functions, $ G := [0, T] \times D$ and $D \subset \mathbb{R}^{n}$ is a closed and bounded domain. 

\medskip
We assume the function $f$ in system (\ref{fds}) to be bounded by a constant vector $M = col(M_{1}, M_{2}, ..., M_{n}) \in \mathbb{R}^{n}$ and to satisfy the Lipschitz condition with a non-negative real matrix $K = (k_{ij})_{i,j=1}^{n}$, i.e. the following inequalities 
\begin{align}
\label{bounded}
\lvert f(t, u(t)) \rvert \leq M,
\end{align}
\begin{equation}
\label{Lipschitz}
\lvert f(t,u_{1}) - f(t,u_{2}) \rvert \leq K \lvert u_{1}-u_{2} \rvert
\end{equation}
hold for $t \in [0, T]$, $u, u_{1}, u_{2} \in D$.

\medskip 
\indent \textit{Note that the operations $\lvert  \cdot \rvert$, $=$, $\leq$, $\max$, etc. between matrices and vectors are understood componentwise.}

\medskip 
\indent Suppose that the set 
\begin{align}
\label{Dbeta}
D_{\beta} := \{ \chi_{0} \in D : \{\lvert u - \chi_{0} \rvert \leq \beta, \,\, u \in \mathbb{R}^{n} \} \subset D \}
\end{align}
is non-empty, where 
\begin{align*}
    \chi_{0} = u(0),
\end{align*}
\begin{align}
\label{beta}
\beta = \frac{M T^{p}}{2^{2p-1}\Gamma(p+1)}, 
\end{align}
and the spectral radius $r(Q)$ of the matrix 
\begin{align}
\label{matrix}
Q :=  \frac{K T^{p}}{2^{2p-1}\Gamma(p+1)}
\end{align}
satisfies 
\begin{align}
\label{spectrad}
r(Q) < 1.
\end{align}
\textit{We aim to find a solution of the FDS (\ref{fds}) which satisfies the Dirichlet boundary conditions (\ref{bcs}) in the space of continuous functions $u: [0, T] \rightarrow D$.} 

\medskip
For this purpose, let us connect the BVP (\ref{fds}), (\ref{bcs}) to the following parametrized sequence of functions $\{u_{m}(\cdot, \chi_1)\}_{m \in \mathbb{Z}^{+}_{0}}$, $\mathbb{Z}^{+}_{0} = \{0, 1, 2, ...\}$, given by the iterative formula:
\begin{align}
\begin{split}
\label{seq2}
    u_{m}(t, \chi_{1}) :=& \alpha_{1} +\chi_{1}t+ (\alpha_{2}-\alpha_{1}-\chi_{1}T)\Big(\frac{t}{T}\Big)^{p}\\
    &+ \frac{1}{\Gamma(p)} \Big[ \int_{0}^{t}(t-s)^{p-1}f(s,u_{m-1}(s, \chi_{1}))ds\\ &-\Big(\frac{t}{T}\Big)^{p}\int_{0}^{T}(T-s)^{p-1}f(s,u_{m-1}(s, \chi_{1}))ds \Big],\\
    u_{0}(t, \chi_{1}) :=& \alpha_{1} +\chi_{1}t+ (\alpha_{2}-\alpha_{1}-\chi_{1}T)\Big(\frac{t}{T}\Big)^{p},
\end{split}
\end{align}
where $ t \in [0, T]$, $u_{0}(t, \chi_{1}) \in D$, and $\chi_{1} \in \Omega \subset \mathbb{R}$ is the value of the first derivative of $u(t)$ at $t=0$, i.e. $u'(0) = \chi_{1}$. Here $\Gamma(p)$ is the Gamma function.

\section{Convergence of functional sequences}\label{sec3}

\subsection{\textit{\textbf{Auxiliary Statements}}}\label{subsec2}

\begin{lemma}\label{lemma1}\cite{Marynets}
 If $f(t)$ is a continuous function on $t \in [a, b]$, then the following estimate 
\begin{equation}
\begin{gathered}
\label{estlem1}
    \frac{1}{\Gamma(p) } \Biggl \lvert \int_{a}^{t}(t-s)^{p-1}f(s)ds -\Big(\frac{t-a}{b-a}\Big)^{p}\int_{a}^{b}(b-s)^{p-1}f(s)ds \Biggr \rvert \\ \leq \alpha_{1}(t) \max_{a\leq t \leq b} \lvert f(t) \rvert,
\end{gathered}
\end{equation}
where 
\begin{align}
\label{alpha1t}
    \alpha_{1}(t)  := \frac{2(t-a)^{p}}{\Gamma(p+1)} \Big(\frac{b-t}{b-a}\Big)^{p},
\end{align}
holds for all $ t \in [t_{1}, T]$.\\
\end{lemma}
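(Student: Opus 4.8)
The plan is to rewrite the bracketed expression as a single integral against a Green-type kernel and then estimate its contribution piecewise. Setting $\lambda := \left(\frac{t-a}{b-a}\right)^{p}$ and splitting $\int_{a}^{b} = \int_{a}^{t} + \int_{t}^{b}$, I would combine the two integrals running over $[a,t]$ so that the left-hand side of \x{estlem1} becomes
\[
\frac{1}{\Gamma(p)}\left| \int_{a}^{t} \Big[(t-s)^{p-1} - \lambda (b-s)^{p-1}\Big] f(s)\,ds \;-\; \lambda \int_{t}^{b} (b-s)^{p-1} f(s)\,ds \right|.
\]
Writing $M_{f} := \max_{a\le s\le b}|f(s)|$ and applying the triangle inequality, it suffices to bound the two resulting terms separately and show that each contributes at most $\tfrac12\alpha_{1}(t)M_{f}$.

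For the tail term over $[t,b]$ the estimate is immediate: using $|f|\le M_{f}$ and $\int_{t}^{b}(b-s)^{p-1}\,ds = (b-t)^{p}/p$ gives $\frac{\lambda (b-t)^{p}}{\Gamma(p+1)} M_{f}$, which, since $\lambda (b-t)^{p} = \frac{(t-a)^{p}(b-t)^{p}}{(b-a)^{p}}$, is exactly $\tfrac12\alpha_{1}(t)M_{f}$. The elementary identity driving the remaining term is the exact antiderivative computation
\[
\int_{a}^{t}\Big[(t-s)^{p-1}-\lambda(b-s)^{p-1}\Big]\,ds = \frac{(t-a)^{p}}{p} - \lambda\,\frac{(b-a)^{p}-(b-t)^{p}}{p} = \frac{(t-a)^{p}(b-t)^{p}}{p\,(b-a)^{p}},
\]
after which $\frac{1}{\Gamma(p)}$ times this value, multiplied by $M_{f}$, again equals $\tfrac12\alpha_{1}(t)M_{f}$. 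Adding the two contributions yields $\alpha_{1}(t)M_{f}$, the claimed bound; the degenerate cases $t=a$ and $t=b$ (where both sides vanish) are checked directly.

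The step I expect to be the main obstacle is controlling the kernel $g(s) := (t-s)^{p-1}-\lambda(b-s)^{p-1}$ on $[a,t]$, since replacing $\int_{a}^{t} g f\,ds$ by $M_{f}\int_{a}^{t} g\,ds$ is only legitimate where $g$ keeps a sign. At $s=a$ one computes $g(a) = (t-a)^{p-1}\frac{b-t}{b-a}\ge 0$, but as $s\to t^{-}$ the term $(t-s)^{p-1}\to 0$ (because $p-1\in(0,1]$) while $\lambda(b-s)^{p-1}\to\lambda(b-t)^{p-1}>0$, so $g$ turns negative near $s=t$. This sign change is precisely what distinguishes the fractional range $p\in(1,2]$ from the classical first-order setting, where the analogous kernel $1-\frac{t-a}{b-a}=\frac{b-t}{b-a}$ is a nonnegative constant and the estimate is sharp. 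I would resolve it by locating the unique crossing point $s^{\star}$ solving $\frac{t-s}{b-s}=\lambda^{1/(p-1)}$, splitting $\int_{a}^{t}|g|$ at $s^{\star}$, and estimating the two monotone pieces, thereby justifying that the $[a,t]$-contribution is controlled by the exact integral above.
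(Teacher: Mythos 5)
Your decomposition, the bound on the tail term over $[t,b]$, and the exact evaluation $\int_{a}^{t}\bigl[(t-s)^{p-1}-\lambda(b-s)^{p-1}\bigr]ds=\frac{(t-a)^{p}(b-t)^{p}}{p(b-a)^{p}}$ are all correct, and you have put your finger on exactly the right difficulty: the kernel $g(s)=(t-s)^{p-1}-\lambda(b-s)^{p-1}$ changes sign on $(a,t)$ for every $p\in(1,2]$ and $t\in(a,b)$. (There is no in-paper proof to compare against --- the paper defers to \cite{Marynets}, where the argument implicitly treats $g$ as nonnegative, i.e.\ it has the same issue you flag.) The problem is that your proposed repair cannot work. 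Once $g<0$ on $(s^{\star},t)$, splitting at $s^{\star}$ gives $\int_{a}^{t}\lvert g(s)\rvert\,ds=\int_{a}^{t}g(s)\,ds+2\int_{s^{\star}}^{t}\lvert g(s)\rvert\,ds$, which is \emph{strictly larger} than the signed integral $\frac{(t-a)^{p}(b-t)^{p}}{p(b-a)^{p}}$; no estimation of the two monotone pieces can bring it back down, because $\int_{a}^{t}\lvert g\rvert$ is itself the sharp constant for $\sup\bigl\{\lvert\int_{a}^{t}g\,f\,ds\rvert:\max\lvert f\rvert\le 1\bigr\}$ (take $f$ a continuous approximation of $\operatorname{sign}g$).

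Worse, this is not merely a gap in the method of proof: the inequality \x{estlem1} as stated fails for such $f$. Take $p=2$, $[a,b]=[0,1]$, $t=\tfrac12$, so $\lambda=\tfrac14$, $\Gamma(p)=1$ and $\alpha_{1}(\tfrac12)=t^{2}(1-t)^{2}=\tfrac1{16}$. The full kernel is $G(s)=\tfrac14-\tfrac34 s$ on $[0,\tfrac12]$ and $G(s)=-\tfrac14(1-s)$ on $(\tfrac12,1]$, and one computes $\int_{0}^{1}\lvert G(s)\rvert\,ds=\tfrac1{24}+\tfrac1{96}+\tfrac1{32}=\tfrac1{12}>\tfrac1{16}$, so for continuous $f$ close to $\operatorname{sign}G$ the left-hand side of \x{estlem1} exceeds $\alpha_{1}(t)\max\lvert f\rvert$. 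The correct conclusions of your analysis are therefore: (i) the tail term and the \emph{signed} $[a,t]$-integral each equal $\tfrac12\alpha_{1}(t)$, which is where the stated constant comes from; (ii) for arbitrary continuous $f$ the estimate only holds with $\alpha_{1}(t)$ replaced by $\frac{1}{\Gamma(p)}\int_{a}^{b}\lvert G(s)\rvert\,ds$, or by the cruder sign-free bound $\frac{2(t-a)^{p}}{\Gamma(p+1)}$ obtained from the triangle inequality applied to the two original integrals --- either replacement would propagate into $\beta$, $Q$ and the convergence condition \x{spectrad}. To keep the constant $\alpha_1(t)$ one would need an extra hypothesis on $f$ ruling out the sign-alternating worst case.
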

\begin{lemma}\label{lemma2}\cite{Marynets}
Let $\{ \alpha_{m} (\cdot)\}_{m \geq 1}$ be a sequence of continuous functions on ${t \in [a, b]}$, given by 
\begin{align*}
\alpha_{m}(t) :=& \frac{1}{\Gamma(p)} \Biggr[ \int_{a}^{t}\Big[(t-s)^{p-1}-\Big(\frac{t-a}{b-a}\Big)^{p}(b-s)^{p-1}\Big]\alpha_{m-1}(s)ds \\
&+\Big(\frac{t-a}{b-a}\Big)^{p}\int_{t}^{b}(b-s)^{p-1}\alpha_{m-1}(s)ds \Biggl],
\end{align*}
where 
\begin{align*}
    \alpha_{0}(t) &:= 1,\\
    \alpha_{1}(t) &:= \frac{2(t-a)^{p}}{\Gamma(p+1)} \Big(\frac{b-t}{b-a}\Big)^{p}.
\end{align*}
Then the estimate 
\begin{align}
\label{estlem2}
    \alpha_{m+1}(t) \leq \frac{(b-a)^{mp}\alpha_{1}(t)}{2^{[m(2p-1)]}[\Gamma(p+1)]^{m}} \leq \frac{(b-a)^{(m+1)p}}{2^{[(m+1)(2p-1)]}[\Gamma(p+1)]^{m+1}} 
\end{align}
holds for $m \in \mathbb{Z}^{+}_{0} $.\\
\end{lemma}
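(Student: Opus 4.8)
The plan is to argue by induction on $m$, after separating the two inequalities in (\ref{estlem2}). Writing $C_k:=\frac{(b-a)^{kp}}{2^{k(2p-1)}[\Gamma(p+1)]^k}$, the second inequality amounts to $C_m\,\alpha_1(t)\le C_{m+1}$, which is just $\alpha_1(t)\le\max_{[a,b]}\alpha_1$: since $\alpha_1(t)=\frac{2}{\Gamma(p+1)(b-a)^p}[(t-a)(b-t)]^p$ and $(t-a)(b-t)$ attains its maximum $(b-a)^2/4$ at the midpoint $t=(a+b)/2$, one gets $\max_{[a,b]}\alpha_1=\frac{(b-a)^p}{2^{2p-1}\Gamma(p+1)}$, and multiplying by $C_m$ returns $C_{m+1}$. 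Hence the whole statement reduces to the first inequality $\alpha_{m+1}(t)\le C_m\,\alpha_1(t)$.

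For the base case $m=0$ this is the identity $\alpha_1(t)\le\alpha_1(t)$. For the inductive step I would read the recursion as $\alpha_{m+1}=\mathcal{L}[\alpha_m]$, with
\[
\mathcal{L}[g](t):=\frac{1}{\Gamma(p)}\Big[\int_a^t(t-s)^{p-1}g(s)\,ds-\big(\tfrac{t-a}{b-a}\big)^p\!\int_a^t(b-s)^{p-1}g(s)\,ds+\big(\tfrac{t-a}{b-a}\big)^p\!\int_t^b(b-s)^{p-1}g(s)\,ds\Big].
\]
The key algebraic fact, which is exactly the computation behind Lemma~\ref{lemma1}, is that $\mathcal{L}$ sends a constant $c$ to $c\,\alpha_1(t)$: integrating the three elementary kernels and using $\big(\tfrac{t-a}{b-a}\big)^p(b-a)^p=(t-a)^p$ produces the cancellation that leaves precisely $c\cdot\frac{2(t-a)^p}{\Gamma(p+1)}\big(\tfrac{b-t}{b-a}\big)^p=c\,\alpha_1(t)$. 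Granting the inductive hypothesis $\alpha_m(s)\le C_m$ (together with $\alpha_m\ge0$, which I would verify alongside the bound), the target would then follow — \emph{provided $\mathcal{L}$ is order-preserving} — from the single line $\alpha_{m+1}(t)=\mathcal{L}[\alpha_m](t)\le\mathcal{L}[C_m](t)=C_m\,\alpha_1(t)$.

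The step I expect to be the main obstacle is precisely this monotone substitution $\mathcal{L}[\alpha_m]\le\mathcal{L}[C_m]$. The operator $\mathcal{L}$ is not order-preserving on all continuous functions: the inner kernel $(t-s)^{p-1}-\big(\tfrac{t-a}{b-a}\big)^p(b-s)^{p-1}$ is nonnegative for $s$ near $a$ but equals $-\big(\tfrac{t-a}{b-a}\big)^p(b-t)^{p-1}<0$ at $s=t$, hence changes sign at some $s^\ast\in(a,t)$. On $(s^\ast,t)$ the inequality $\alpha_m\le C_m$ is reversed after multiplication by the negative kernel, so the clean replacement of $\alpha_m$ by $C_m$ is not licensed there and the one-line bound above is not justified as stated. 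One can still extract a valid estimate by a crude pointwise bound — using $\alpha_m\ge0$ where the kernel is negative and $\alpha_m\le C_m$ where it is positive — but this overshoots $C_m\,\alpha_1(t)$ by a strictly positive amount and is too weak to close the induction. The way I would repair it is to retain the shape of $\alpha_m$ through the sharper hypothesis $\alpha_m(s)\le C_{m-1}\alpha_1(s)$, so that the factor multiplying the negative part of the kernel itself decays near the endpoints, estimate the inner integral over $[a,s^\ast]$ and $[s^\ast,t]$ separately, and reassemble using the identity $\mathcal{L}[C_m]=C_m\,\alpha_1(t)$. Showing that the sign-indefinite inner kernel nevertheless reproduces exactly the factor $\alpha_1(t)$, rather than a larger quantity, is the delicate computation at the heart of the proof, and it is here that the explicit form of $\alpha_1$ and the midpoint maximisation re-enter.
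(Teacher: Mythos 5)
The paper itself contains no proof of this lemma: both Lemma~\ref{lemma1} and Lemma~\ref{lemma2} are imported from \cite{Marynets} and the text explicitly defers their proofs to that reference, so there is no in-paper argument to compare you against line by line. Judged on its own terms, your proposal establishes the easy half and leaves the essential half unproved. The reduction of the second inequality in (\ref{estlem2}) to the midpoint maximisation $\max_{[a,b]}\alpha_1=\tfrac{(b-a)^p}{2^{2p-1}\Gamma(p+1)}$ is correct, as are the identity $\mathcal{L}[1]=\alpha_1$ and the base case $m=0$. But the first inequality $\alpha_{m+1}(t)\le C_m\alpha_1(t)$ --- the entire content of the lemma --- is never established. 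You correctly observe that the kernel $(t-s)^{p-1}-\big(\tfrac{t-a}{b-a}\big)^p(b-s)^{p-1}$ is positive near $s=a$ and negative near $s=t$ (for $p\in(1,2]$ the ratio $\big(\tfrac{t-s}{b-s}\big)^{p-1}$ decreases from $\big(\tfrac{t-a}{b-a}\big)^{p-1}>\big(\tfrac{t-a}{b-a}\big)^{p}$ to $0$), so $\mathcal{L}$ is not order-preserving and the substitution $\mathcal{L}[\alpha_m]\le\mathcal{L}[C_m]$ is not licensed. Having diagnosed this, however, you only sketch a repair (split at the sign change, carry the hypothesis $\alpha_m\le C_{m-1}\alpha_1$) and explicitly defer ``the delicate computation at the heart of the proof''. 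That is a genuine gap, and not a small one: as you yourself note, the crude bound obtained by handling the negative part of the kernel separately overshoots $C_m\alpha_1(t)$, so something sharper is genuinely required and has not been supplied.

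The missing ingredient is Lemma~\ref{lemma1}, which is stated immediately above Lemma~\ref{lemma2} and which you never invoke. Splitting $\int_a^b=\int_a^t+\int_t^b$ inside (\ref{estlem1}) shows that the expression estimated there is exactly your $\mathcal{L}[f](t)$ except for the sign of the integral over $[t,b]$; applying (\ref{estlem1}) at a fixed $t$ to the function equal to $\alpha_m$ on $[a,t]$ and to $-\alpha_m$ on $[t,b]$ (the estimate depends only on $\sup|f|$, so the jump at $s=t$ is immaterial) yields $\lvert\alpha_{m+1}(t)\rvert\le\alpha_1(t)\max_{[a,b]}\lvert\alpha_m\rvert$ in one line. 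Carrying $\lvert\alpha_m\rvert\le C_{m-1}\alpha_1$ as the induction hypothesis and combining with your midpoint bound then closes the induction. Your worry about the sign-indefinite kernel is thereby not dissolved but relocated into the proof of Lemma~\ref{lemma1} itself, which this paper takes on trust from \cite{Marynets}; within the logic of the present paper Lemma~\ref{lemma1} is available as a black box, and invoking it is what turns your sketch into a proof.
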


For proofs of Lemmas \ref{lemma1} and \ref{lemma2} we refer to \cite{Marynets}.

\subsection{\textit{\textbf{Main Result}}}\label{subsec3}

\begin{theorem}\label{thm1} Assume that conditions (\ref{bounded})-(\ref{spectrad}) hold for the BVP (\ref{fds})-(\ref{bcs}). Then for all fixed $\chi_{1} \in \Omega$, it holds:\\
\indent 1. Functions of the sequence (\ref{seq2}) are continuous and satisfy Dirichlet boundary conditions $u_{m}(0, \chi_{1})= \alpha_{1}$, $u_{m}(T, \chi_{1}) = \alpha_{2}$.\\
\indent 2. The sequence of functions (\ref{seq2}) for $t \in [0, T]$ converges uniformly as $ m \rightarrow \infty$ to the limit function 
\begin{align}
\label{limfun}
    u_{\infty}(t, \chi_{1}) = \lim_{m \rightarrow \infty} u_{m}(t, \chi_{1}).
\end{align}
\indent 3. The limit function satisfies boundary conditions $u_{\infty}(0, \chi_{1})=\alpha_{1}$, \\ ${u_{\infty}(T, \chi_{1})= \alpha_{2}}$.\\
\indent4. The limit function (\ref{limfun}) is a unique solution to the integral equation 
\begin{align}
\begin{split}
\label{inteq}
u(t) =& \alpha_{1} +\chi_{1}t+ (\alpha_{2}-\alpha_{1}-\chi_{1}T)\Big(\frac{t}{T}\Big)^{p} \\
    &+ \frac{1}{\Gamma(p)} \Big[ \int_{0}^{t}(t-s)^{p-1}f(s,u(s))ds -\Big(\frac{t}{T}\Big)^{p}\int_{0}^{T}(T-s)^{p-1}f(s,u(s))ds \Big],
\end{split}
\end{align}
i.e. it is a unique solution on $t \in [0, T]$ of the Cauchy problem for the modified system of FDE's:
\begin{align}
\begin{split}
\label{modsys}
     {^C_{0}}{D}^{p}_{t} u(t) &= f(t,u(t)) + \Delta(\chi_{1})\\
      u(0) &= \alpha_{1},\\
      u'(0) &= \chi_{1},
\end{split}
\end{align}
where $\Delta : \Omega \rightarrow \mathbb{R}^{n}$ is a mapping defined by
\begin{equation}
\begin{gathered}
\label{delta}
 \Delta(\chi_{1}):=  \frac{(\alpha_{2} -\alpha_{1} -\chi_{1}T)\Gamma(p+1)}{T^{p}} \\ - \frac{p}{T^{p}} \int_{0}^{T}(T-s)^{p-1}f(s, u_{\infty}(s, \chi_{1}))ds.
 \end{gathered}
\end{equation}

\indent 5. The following error estimate holds: 
\begin{align}
\label{est}
    \lvert u_{\infty}(t,\chi_{1}) - u_{m}(t,\chi_{1}) \rvert \leq &  \frac{T^{p}}{2^{2p-1}\Gamma(p+1)}  Q^{m}(I_{n} - Q)^{-1}M,
\end{align}
where $M$ and $Q$ are defined by (\ref{bounded}) and (\ref{matrix}), and $I_{n}$ is a unit $n \times n$ matrix.
\end{theorem}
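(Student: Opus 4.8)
The plan is to prove the five claims in order, using Lemmas \ref{lemma1} and \ref{lemma2} as the analytic engine and the spectral bound \x{spectrad} to guarantee summability. For Part 1, I would induct on $m$: granted $u_{m-1}(\cdot,\chi_1)$ continuous and $D$-valued, $u_m(\cdot,\chi_1)$ is continuous because $f$ is continuous and both fractional integrals in \x{seq2} depend continuously on $t$. To keep the iterates in $D$, apply Lemma \ref{lemma1} with $a=0,\,b=T$ to the bracketed correction in \x{seq2}: by \x{bounded} its modulus is $\le\alpha_1(t)M$, and since $\max_{[0,T]}\alpha_1(t)=T^p/(2^{2p-1}\Gamma(p+1))$ we obtain $|u_m(t,\chi_1)-u_0(t,\chi_1)|\le\beta$; as the $\beta$-ball about any point of $D_\beta$ lies in $D$ by \x{Dbeta}, the iterate stays in $D$ provided the base curve $u_0(\cdot,\chi_1)$ ranges in $D_\beta$. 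The boundary identities are read off the algebra of \x{seq2}: at $t=0$ the linear, the $(t/T)^p$ and both integral terms vanish, leaving $\alpha_1$; at $t=T$ the two fractional integrals cancel and the polynomial part telescopes to $\alpha_2$. Since this holds for every $m$, Part 3 is inherited by the uniform (hence pointwise) limit once Part 2 is established.

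Parts 2 and 5 are the core. Put $r_m(t):=|u_{m+1}(t,\chi_1)-u_m(t,\chi_1)|$. The base estimate $r_0(t)\le\alpha_1(t)M$ is Lemma \ref{lemma1} applied to $f(\cdot,u_0)$. Subtracting consecutive instances of \x{seq2} cancels the polynomial part and leaves
\[
u_{m+1}-u_m=\frac{1}{\Gamma(p)}\Big[\int_0^t(t-s)^{p-1}g_m\,ds-\Big(\tfrac{t}{T}\Big)^p\int_0^T(T-s)^{p-1}g_m\,ds\Big],\quad g_m:=f(\cdot,u_m)-f(\cdot,u_{m-1}),
\]
and \x{Lipschitz} gives $|g_m|\le Kr_{m-1}$. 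Feeding the hypothesis $r_{m-1}(s)\le\alpha_m(s)K^{m-1}M$ into this difference reproduces the recursion defining $\alpha_{m+1}$ in Lemma \ref{lemma2}, so $r_m(t)\le\alpha_{m+1}(t)K^mM$; then \x{estlem2} collapses this to $r_m(t)\le\alpha_1(t)Q^mM\le\frac{T^p}{2^{2p-1}\Gamma(p+1)}Q^mM$ with $Q$ from \x{matrix}. Since $r(Q)<1$, the Neumann series $\sum_{k\ge0}Q^k=(I_n-Q)^{-1}$ converges with non-negative entries, so $\sum_m r_m$ converges uniformly in $t$; hence $\{u_m\}$ is uniformly Cauchy and converges uniformly to $u_\infty$ (Part 2). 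Summing the tail yields $|u_\infty-u_m|\le\frac{T^p}{2^{2p-1}\Gamma(p+1)}Q^m(I_n-Q)^{-1}M$, which is \x{est} (Part 5).

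For Part 4, letting $m\to\infty$ in \x{seq2} and using uniform convergence together with continuity of $f$ to pass the limit under both integrals shows $u_\infty$ solves \x{inteq}. To match \x{inteq} with \x{modsys}, apply ${}^C_0D^p_t$ termwise: the constant $\alpha_1$ and the affine term $\chi_1t$ are annihilated since $p\in(1,2]$; $(\alpha_2-\alpha_1-\chi_1T)(t/T)^p$ yields $(\alpha_2-\alpha_1-\chi_1T)\Gamma(p+1)/T^p$; the Riemann--Liouville integral $\frac1{\Gamma(p)}\int_0^t(t-s)^{p-1}f\,ds$ returns $f(t,u(t))$; and $-\frac1{\Gamma(p)}(t/T)^p\int_0^T(T-s)^{p-1}f\,ds$ returns $-\frac{p}{T^p}\int_0^T(T-s)^{p-1}f\,ds$. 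Collecting these gives $f(t,u)+\Delta(\chi_1)$ with $\Delta$ exactly as in \x{delta}, while the value and first derivative at $t=0$ of the $(t/T)^p$ and integral terms vanish for $p>1$, leaving $u(0)=\alpha_1,\ u'(0)=\chi_1$; this is the claimed equivalence. Uniqueness follows by supposing two solutions $u,v$ of \x{inteq}, bounding $|u-v|$ through Lemma \ref{lemma1} and \x{Lipschitz} to get $\max_t|u-v|\le Q\max_t|u-v|$, i.e.\ $(I_n-Q)\max_t|u-v|\le0$, and multiplying by the non-negative matrix $(I_n-Q)^{-1}$ to conclude $u\equiv v$.

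The main obstacle is the inductive difference estimate of Part 2: the kernel $(t-s)^{p-1}-(t/T)^p(T-s)^{p-1}$ changes sign on $[0,t]$, so reproducing the sign-sensitive recursion of Lemma \ref{lemma2} from $|u_{m+1}-u_m|$ without weakening the inductive inequality is where the care lies --- this is precisely the cancellation that Lemmas \ref{lemma1} and \ref{lemma2} are built to capture. (A sign-free alternative is to iterate Lemma \ref{lemma1} in the max-norm, which also yields $r_m(t)\le\alpha_1(t)Q^mM$.) Everything else --- the boundary values, the termwise Caputo computation of Part 4, and the Neumann-series summation --- is routine once $r(Q)<1$ is in hand.
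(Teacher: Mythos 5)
Your proposal is correct and follows essentially the same route as the paper: Lemma \ref{lemma1} for the base estimate and invariance of $D$, the Lipschitz condition together with the recursion of Lemma \ref{lemma2} for the inductive bound $r_m(t)\le\alpha_1(t)Q^mM$, the Neumann series under $r(Q)<1$ for uniform convergence and the error estimate \x{est}, and the contraction inequality $\max_t|u-v|\le Q\max_t|u-v|$ for uniqueness. The only cosmetic difference is in Part 4, where you compute the Caputo derivative of \x{inteq} termwise while the paper instead invokes the standard equivalence of the Cauchy problem \x{modsys} with its integral formulation and compares the two integral equations; these are interchangeable.
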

\begin{proof}
The first statement follows directly from computations, since the sequence of functions (\ref{seq2}) is constructed in such a way that it satisfies the Dirichlet boundary conditions (\ref{bcs}).\\
\indent Now we prove that functions (\ref{seq2}) form a Cauchy sequence in the Banach space $C([0, T], \mathbb{R}^{n})$.  We first show that for an arbitrary point ${(t, \chi_{1}) \in [0, T] \times \Omega}$, $u_{m}(t, \chi_{1}) \in D$, $ \forall m \geq 0$. Using the estimates in (\ref{estlem1}) and (\ref{estlem2}), we find:\\
\begin{equation}
\begin{gathered}
\label{estm1}
      \lvert u_{1}(t, \chi_{1}) - u_{0}(t, \chi_{1}) \rvert\\ =  \Biggl \lvert \frac{1}{\Gamma(p)} \Big[ \int_{0}^{t}(t-s)^{p-1}f(s,u_{0}(s, \chi_{1})))ds\\ -\Big(\frac{t}{T}\Big)^{p}\int_{0}^{T}(T-s)^{p-1}f(s,u_{0}(s, \chi_{1}))ds \Big] \Biggr \rvert \\
            \leq   \alpha_{1}(t) \max_{0\leq t \leq T} \lvert f(t, u_{0}) \rvert
       \leq   \alpha_{1}(t) M
       \leq  \frac{T^{p}M}{2^{2p-1}\Gamma(p+1)} = \beta,
\end{gathered}
 \end{equation}
 where $\alpha_{1}(t)$ is given by (\ref{alpha1t}). This shows that, given an arbitrary $(t, \chi_{1}) \in [0, T] \times \Omega$, $u_{1}(t, \chi_{1}) \in D$. Similarly, by the principle of mathematical induction, for $m > 1$
\begin{equation*}
\begin{gathered}
      \lvert u_{m}(t, \chi_{1}) - u_{0}(t, \chi_{1}) \rvert \\=  \Biggl \lvert \frac{1}{\Gamma(p)} \Big[ \int_{0}^{t}(t-s)^{p-1}f(s,u_{m-1}(s,\chi_{1})))ds\\ -\Big(\frac{t}{T}\Big)^{p}\int_{0}^{T}(T-s)^{p-1}f(s,u_{m-1}(s, \chi_{1}))ds \Big] \Biggr \rvert \\
      \leq   \alpha_{1}(t) \max_{0 \leq t \leq T} \lvert f(t, u_{m-1}(s,\chi_{1})) \rvert \\
         \leq   \frac{T^{p}M}{2^{2p-1}\Gamma(p+1)} = \beta,
\end{gathered}
 \end{equation*}
 which proves that $u_{m}(t, \chi_{1}) \in D$, $\forall \,\, (t, \chi_{1}) \in [0, T] \times \Omega, \,\, m \geq 0$. \\
 \indent Now we will prove that the estimate 
 \begin{align}
 \label{est_app}
     \lvert u_{m}(t,\chi_{1}) - u_{m-1}(t,\chi_{1}) \rvert \leq K^{m-1}M \alpha_{m}(t) \leq Q^{m-1}M \alpha_{1}(t)
 \end{align}
 holds for $m \geq 1$, where $Q$ is defined in (\ref{matrix}). When $m =1$, (\ref{est_app}) follows directly from (\ref{estm1}). By induction and applying (\ref{Lipschitz}) and the estimate in (\ref{estlem2}), we obtain 
\begin{equation*}
 \begin{gathered}
   \lvert  u_{m+1}(t,\chi_{1}) - u_{m}(t,\chi_{1}) \rvert  \\
       \leq   \frac{1}{\Gamma(p)} \Bigg[ \int_{0}^{t} \Big[ (t-s)^{p-1} - \Big(\frac{t}{T} \Big)^{p}(T-s)^{p-1} \Big]\lvert f(s,u_{m}(t,\chi_{1}))-f(s,u_{m-1}(t,\chi_{1})) \rvert ds\\
      + \Big(\frac{t}{T} \Big)^{p} \int_{t}^{T}(T-s)^{p-1}\lvert f(s,u_{m}(t,\chi_{1})) - f(s,u_{m-1}(t,\chi_{1}))\rvert ds \Bigg] \end{gathered}
 \end{equation*}
 
 \begin{equation*}
 \begin{gathered}
       \leq  \frac{K}{\Gamma(p)}  \Bigg[ \int_{0}^{t} \Big[ (t-s)^{p-1} - \Big(\frac{t}{T} \Big)^{p}(T-s)^{p-1} \Big]\lvert u_{m}(t,\chi_{1})   -u_{m-1}(t,\chi_{1}) \rvert ds\\
      + \Big(\frac{t}{T} \Big)^{p} \int_{t}^{T}(T-s)^{p-1}\lvert u_{m}(t,\chi_{1}) - u_{m-1}(t,\chi_{1})\rvert ds \Bigg] \\ 
      \leq  K^{m}M \frac{1}{\Gamma(p)}  \Biggl[ \int_{0}^{t} \Big[ (t-s)^{p-1} - \Big(\frac{t}{T} \Big)^{p}(T-s)^{p-1} \Big]\alpha_{m}(s)ds\\
      + \Big(\frac{t}{T} \Big)^{p} \int_{t}^{T}(T-s)^{p-1}\alpha_{m}(s)ds \Biggr] \\
       = K^{m}M \alpha_{m+1}(t) \leq  K^{m}M \frac{T^{mp}\alpha_{1}(t)}{2^{[m(2p-1)]}[\Gamma(p+1)]^{m}} = Q^{m}M\alpha_{1}(t),
\end{gathered}
 \end{equation*}
for all $ t \in [0, T],\, u_{0} \in D$. In view of (\ref{est_app}), we get the estimate 
 \begin{equation*}
 \begin{gathered}
     \lvert u_{m+j}(t,\chi_{1}) - u_{m}(t,\chi_{1}) \rvert 
     = \sum_{k=1}^{j} \lvert u_{m+k}(t,\chi_{1}) - u_{m+k-1}(t,\chi_{1}) \rvert \\
     \leq  \sum_{k=1}^{j} K^{m+k-1}M\alpha_{m+k}(t)
     \leq  \sum_{k=1}^{j} \frac{K^{m+k-1}(T-t_{1})^{p(m+k-1)}M\alpha_{1}(t)}{2^{(m+k-1)(2p-1)}[\Gamma(p+1)]^{m+k-1}}\\
     = \sum_{k=0}^{j-1} Q^{m+k}M \alpha_{1}(t) = Q^{m} \sum_{k=0}^{j-1} Q^{k}M\alpha_{1}(t).
\end{gathered}
 \end{equation*}
 Since $r(Q) < 1$, it holds that
 \begin{align*}
     \lim_{n \rightarrow \infty } \sum_{k = 0}^{n} Q^{k} \leq (I_{n}- Q)^{-1} \,\,\,\,\, \text{and} \,\,\,\,\, \lim_{m \rightarrow \infty} Q^{m} = O_{n},
 \end{align*}
 where $O_{n}$ denotes the $n \times n$ matrix of zeros. Passing in the last inequality to the limit when $ j \rightarrow \infty$, we obtain the estimate in (\ref{est}). Thus, the sequence of functions in (\ref{seq2}) converges uniformly to the limit function $u_{\infty}(t, \chi_{1})$ in the domain $[0, T] \times D$, according to the Cauchy criteria.\\ 
 \indent Since $u_{\infty}(t,\chi_{1})$ is the limit of a sequence of functions (\ref{seq2}), all of which satisfy boundary conditions (\ref{bcs}), $u_{\infty}(t,\chi_{1})$ also satisfies the boundary conditions. Passing in (\ref{seq2}) to the limit $ m \rightarrow \infty$, we get that the function $u_{\infty}(t, \chi_{1})$ is a solution to the integral equation (\ref{inteq}).\\
 \indent Next, we show that the integral equation (\ref{inteq}) has a unique continuous solution. Suppose $u_{1}(t)$ and $u_{2}(t)$ are two distinct solutions to (\ref{inteq}). Then 
 \begin{equation*}
 \begin{gathered}
     \lvert u_{1}(t) - u_{2}(t) \rvert \leq  \frac{K}{\Gamma(p)}  \Bigg[\int_{0}^{t}(t-s)^{p-1} \lvert u_{1}(s)-u_{2}(s)\rvert ds\\
     +\Big(\frac{t}{T}\Big)^{p}\int_{0}^{T}(T-s)^{p-1}\lvert u_{1}(s)-u_{2}(s) \rvert ds\Bigg] \\
      = K \alpha_{1}(t)\max_{0\leq s \leq T}\lvert u_{1}(s) - u_{2}(s) \rvert
      \leq  \frac{KT^{p}}{2^{2p-1}\Gamma(p+1)}\max_{0\leq s \leq T} \lvert u_{1}(s) - u_{2}(s)\rvert \\
      =  Q \max_{0\leq s \leq T} \lvert u_{1}(s) - u_{2}(s)\rvert,\\
 \end{gathered}
  \end{equation*}
for all $ s \in [0, T]$. Thus, the inequality 
 \begin{align*}
   \max_{0\leq t \leq T}  \lvert u_{1}(t) - u_{2}(t) \rvert \leq Q \max_{0\leq t \leq T} \lvert u_{1}(t) - u_{2}(t) \rvert
 \end{align*}
 holds, which implies $\underset{0\leq t \leq T}{\max}\lvert u_{1}(t) - u_{2}(t) \rvert  = 0 $, since $r(Q) < 1$. Thus, $u_{1}(t) = u_{2}(t)$ for all $t \in [0, T]$. Moreover, the initial value problem (IVP) (\ref{modsys}) is equivalent to the integral equation \cite{survey}
 \begin{align}
 \begin{split}
 \label{inteq2}
       u(t) &= \alpha_{1} +\chi_{1}t+ \frac{1}{\Gamma(p)}\int_{t_{1}}^{t}(t-s)^{p-1}[f(s, u(s)) + \Delta(\chi_{1})]ds\\
    &=  \alpha_{1} + \chi_{1}t+\frac{1}{\Gamma(p)}\int_{t_{1}}^{t}(t-s)^{p-1}f(s, u(s)) ds + \frac{(t-t_{1})^{p}\Delta(\chi_{1})}{\Gamma(p+1)} \\
    &+ \frac{1}{\Gamma(p)} \Big[ \int_{0}^{t}(t-s)^{p-1}f(s,u(s))ds -\Big(\frac{t}{T}\Big)^{p}\int_{0}^{T}(T-s)^{p-1}f(s,u_{\infty}(s, \chi_{1}))ds \Big],\\
\end{split}
 \end{align}
 where the perturbation $\Delta(\chi_{1})$ is given by (\ref{delta}). Comparing (\ref{inteq}) and (\ref{inteq2}) and recalling that $u_{\infty}(t, \chi_{1})$ is the unique continuous solution of ($\ref{inteq}$), it follows that $u(t) = u_{\infty}(t, \chi_{1})$ in (\ref{inteq2}), i.e. $u_{\infty}(t, \chi_{1})$ is the unique continuous solution of (\ref{modsys}). This completes the proof.
\end{proof}

Next, we show the connection between the solution to the IVP (\ref{modsys}) and the original BVP.

\section{Connection of the limit function to BVP}\label{sec4}
Consider the Cauchy problem 
\begin{subequations}
\label{eq:Cauchy}
\begin{align}
  {^C_{0}}{D}^{p}_{t} u(t) &= f(t,u(t)) + \mu, \,\,\, t \in [0, T], \label{eq:Cauchy1}\\
      u(0) &= \alpha_{1}, \label{eq:Cauchy2}\\
      u'(0) &= \chi_{1} \label{eq:Cauchy3},
\end{align}
\end{subequations}
where $\mu \in \mathbb{R}^{n}$ we will call a control parameter, $\alpha_{1} \in D_{\beta}$ and $\chi_{1} \in \Omega$. \\

\begin{theorem}\label{thm2} Let $\chi_{1} \in \Omega$, $\mu \in \mathbb{R}^{n}$ be given vectors. Assume that all conditions of Theorem~\ref{thm1} are satisfied for the FDS (\ref{fds}). Then the solution $u = u( \cdot, \chi_{1}, \mu)$ of the IVP (\ref{eq:Cauchy}) also satisfies boundary conditions (\ref{bcs}) if and only if 
\begin{align}
\label{thm21}
    \mu = \Delta(\chi_{1}),
\end{align}
where $\Delta(\chi_{1})$ is given by (\ref{delta}), and in this case
\begin{align}
\label{thm22}
    u(t,\chi_{1},\mu) = u_{\infty}(t, \chi_{1})\,\,\, \text{\emph{for}}\,\,\, t \in [0, T].
\end{align}
\end{theorem}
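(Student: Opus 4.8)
The plan is to reduce both implications to the equivalence between the Caputo initial value problem and its Volterra integral representation, and then to invoke the uniqueness statement of Theorem~\ref{thm1}. First I would rewrite the IVP (\ref{eq:Cauchy}) in its equivalent integral form (cf. \cite{survey})
\[
u(t) = \alpha_{1} + \chi_{1} t + \frac{\mu\, t^{p}}{\Gamma(p+1)} + \frac{1}{\Gamma(p)}\int_{0}^{t}(t-s)^{p-1} f(s,u(s))\,ds,
\]
where I have used $\frac{1}{\Gamma(p)}\int_{0}^{t}(t-s)^{p-1}\,ds = \frac{t^{p}}{\Gamma(p+1)}$ to evaluate the term carrying the control parameter. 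Since $u(0)=\alpha_{1}$ is already built into (\ref{eq:Cauchy2}), the only part of (\ref{bcs}) that is not automatic is $u(T)=\alpha_{2}$; imposing it and solving the resulting linear relation for $\mu$ gives
\[
\mu = \frac{(\alpha_{2}-\alpha_{1}-\chi_{1}T)\Gamma(p+1)}{T^{p}} - \frac{p}{T^{p}}\int_{0}^{T}(T-s)^{p-1} f(s,u(s))\,ds. \qquad (\star)
\]

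For the sufficiency (``if'') direction I would simply observe that when $\mu=\Delta(\chi_{1})$ the IVP (\ref{eq:Cauchy}) coincides verbatim with the modified system (\ref{modsys}). By Theorem~\ref{thm1}(4) the latter has $u_{\infty}(\cdot,\chi_{1})$ as its unique solution, and by Theorem~\ref{thm1}(3) this limit function satisfies the boundary conditions (\ref{bcs}). Hence the solution of the IVP meets (\ref{bcs}) and equals $u_{\infty}$, which establishes the claim together with (\ref{thm22}).

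The necessity (``only if'') direction is where the main subtlety lies: relation $(\star)$ expresses $\mu$ through $f(s,u(s))$ for the IVP solution $u$ itself, whereas the target $\Delta(\chi_{1})$ in (\ref{delta}) is written with $f(s,u_{\infty}(s,\chi_{1}))$, so a direct comparison is not immediate. The decisive step I would carry out is to substitute $(\star)$ back into the integral form of $u$; after distributing the factor $t^{p}/\Gamma(p+1)$ and using $p/\Gamma(p+1)=1/\Gamma(p)$, the expression reorganizes into exactly the two-integral equation (\ref{inteq}). Thus the boundary-condition-satisfying IVP solution $u$ solves (\ref{inteq}), which by the uniqueness part of Theorem~\ref{thm1}(4) forces $u=u_{\infty}(\cdot,\chi_{1})$; feeding $u=u_{\infty}$ back into $(\star)$ then identifies its right-hand side with $\Delta(\chi_{1})$, giving (\ref{thm21}), and the same identity yields (\ref{thm22}). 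I expect the only genuine obstacle to be the bookkeeping in this last substitution, namely verifying that $(\star)$ converts the plain Volterra representation of the IVP precisely into the form (\ref{inteq}); once that algebra is in place, uniqueness from Theorem~\ref{thm1} closes both implications at once.
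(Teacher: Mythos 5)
Your proposal is correct and follows essentially the same route as the paper: sufficiency by identifying the IVP with the modified system (\ref{modsys}) and invoking Theorem~\ref{thm1}, and necessity by passing to the Volterra integral form, solving for $\mu$ from the condition $u(T)=\alpha_{2}$, substituting back to recover (\ref{inteq}), and concluding via the uniqueness in Theorem~\ref{thm1}(4). The only detail the paper adds that you omit is the brief check (using $\alpha_{1}\in D_{\beta}$) that the resulting solution stays in $D$ so that the uniqueness statement applies.
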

\begin{proof}
First note that the existence and uniqueness of the solution to the IVP (\ref{eq:Cauchy}) on $t \in [0, T]$ and its continuous dependence on $\chi_{1}$ and $\mu$ follow from the theory in \cite{kilbas}.

\medskip
\indent \emph{Sufficiency.} Suppose that
\begin{align*}
    \mu = \Delta(\chi_{1}).
\end{align*}
By Theorem~\ref{thm1}, it follows that the limit function $u_{\infty}(t, \chi_{1})$ of the sequence (\ref{seq2}) is a unique solution to equation (\ref{eq:Cauchy1}), which satisfies boundary conditions (\ref{bcs}). Moreover, the limit function $u_{\infty}(t, \chi_{1})$ also satisfies the initial conditions (\ref{eq:Cauchy2}), (\ref{eq:Cauchy3}). Thus, it is the unique solution to the Cauchy problem (\ref{eq:Cauchy}) for $ \mu = \Delta(\chi_{1})$, and $ u(t,\chi_{1},\mu) = u_{\infty}(t, \chi_{1})$ holds. This also means that the equality in (\ref{thm22}) takes place.

\medskip
\indent \emph{Necessity.} Now we show that the parameter value in (\ref{thm21}) is unique. Suppose that there exists another parameter $\bar{\mu}$, such that the solution $\bar{u}(t, \chi_{1})$ to the IVP 
\begin{align*}
     {^C_{0}}{D}^{p}_{t} u(t) &= f(t, u(t)) + \bar{\mu}, \,\,\, t \in [0, T], \\
      u(0) &=  \alpha_{1}, \\
      u'(0) &= \chi_{1}, \,
\end{align*}
also satisfies the boundary conditions in (\ref{bcs}). Then, according to (\cite{kilbas}, Cor. 3.24), the function $\bar{u}(t, \chi_{1})$ is also a continuous solution to the integral equation
\begin{align}
\label{inteq1}
    \bar{u}(t, \chi_{1}) =& \alpha_{1} + \chi_{1}t + \frac{1}{\Gamma(p)}\int_{0}^{t} (t-s)^{p-1}f(s, \bar{u}(s, \chi_{1}) ds + \frac{t^{p}\bar{\mu}}{\Gamma(p+1)}.
\end{align}
Moreover, $\bar{u}(t, \chi_{1})$ satisfies the boundary conditions in (\ref{bcs}) and the initial condition (\ref{eq:Cauchy3}), that is,
\begin{align*}
    \bar{u}(0, \chi_{1}) =& \alpha_{1},\\
     \bar{u}(T, \chi_{1}) =& \alpha_{2},\\
     \bar{u}{'}(0) =& \chi_{1}.
\end{align*}
Substituting this into equation (\ref{inteq1}) for $t= T$, we obtain 
\begin{align}
\label{mubar}
    \bar{\mu} &=  \frac{(\alpha_{2} -\alpha_{1} -\chi_{1}T)\Gamma(p+1)}{T^{p}} - \frac{p}{(T-t_{1})^{p}} \int_{0}^{T}(T-s)^{p-1}f(s, u(s))ds.
\end{align}
Plugging (\ref{mubar}) into (\ref{inteq1}) yields
\begin{align}
\begin{split}
\label{inteq3}
    \bar{u}(t, \chi_{1}) =& \alpha_{1} +\chi_{1}t+ (\alpha_{2}-\alpha_{1}-\chi_{1}T)\Big(\frac{t}{T}\Big)^{p} \\
    &+ \frac{1}{\Gamma(p)} \Big[ \int_{0}^{t}(t-s)^{p-1}f(s,u(s))ds -\Big(\frac{t}{T}\Big)^{p}\int_{0}^{T}(T-s)^{p-1}f(s,u(s))ds \Big].
\end{split}
\end{align}
Since $\alpha_{1} \in D_{\beta}$, according to the integral equation (\ref{inteq3}) and the definition of $D_{\beta}$, it can be shown that $\bar{u}(t, \chi_{1}) \in D$. Moreover, since Equations (\ref{inteq}) and (\ref{inteq3}) are equivalent, it follows from part 4 of Theorem~\ref{thm1} that $\bar{u}(t, \chi_{1}) = u_{\infty}(t, \chi_{1})$ and $\mu = \Delta(\chi_{1})$.  This completes the proof. 
\end{proof}

\begin{theorem}\label{thm3}
 Let the original BVP (\ref{fds}), (\ref{bcs}) satisfy conditions (\ref{bounded})-(\ref{spectrad}). Then $u_{\infty}( \cdot, \chi_{1}^{*})$ is a solution to the FDS (\ref{fds}) with boundary conditions  (\ref{bcs}) if and only if the point $\chi_{1}^{*}$ is a solution to the determining equation 
\begin{align}
\label{deteq}
    \Delta(\chi_{1}^{*}) = 0,
\end{align}
where $\Delta$ is given by (\ref{delta}). \\
\end{theorem}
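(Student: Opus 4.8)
The plan is to obtain Theorem~\ref{thm3} as a direct consequence of Theorem~\ref{thm1}, which already encodes all the structural information about the limit function $u_{\infty}(\cdot, \chi_{1})$. The two facts I would lean on are: (i) for every fixed $\chi_{1} \in \Omega$, the function $u_{\infty}(\cdot, \chi_{1})$ is the unique solution of the modified Cauchy problem (\ref{modsys}), i.e. it satisfies ${^C_{0}}{D}^{p}_{t} u_{\infty} = f(t, u_{\infty}) + \Delta(\chi_{1})$ together with $u_{\infty}(0, \chi_{1}) = \alpha_{1}$ and $u_{\infty}'(0) = \chi_{1}$; and (ii) by part~3 of Theorem~\ref{thm1}, $u_{\infty}(\cdot, \chi_{1})$ automatically meets the Dirichlet boundary conditions (\ref{bcs}) for every admissible $\chi_{1}$. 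The guiding observation is that the only obstruction preventing $u_{\infty}(\cdot, \chi_{1}^{*})$ from solving the genuine system (\ref{fds}) is the presence of the constant perturbation $\Delta(\chi_{1}^{*})$ on the right-hand side of (\ref{modsys}).

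For the sufficiency direction, I would assume $\Delta(\chi_{1}^{*}) = 0$. Then the right-hand side $f(t, u_{\infty}) + \Delta(\chi_{1}^{*})$ of (\ref{modsys}) collapses to $f(t, u_{\infty})$, so the modified equation becomes exactly the original system (\ref{fds}). Since $u_{\infty}(\cdot, \chi_{1}^{*})$ solves (\ref{modsys}) and, by fact~(ii), already satisfies the boundary conditions (\ref{bcs}), it is a genuine solution of the BVP (\ref{fds}), (\ref{bcs}).

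For the necessity direction, I would assume that $u_{\infty}(\cdot, \chi_{1}^{*})$ solves (\ref{fds}) subject to (\ref{bcs}), so that ${^C_{0}}{D}^{p}_{t} u_{\infty}(t, \chi_{1}^{*}) = f(t, u_{\infty}(t, \chi_{1}^{*}))$ for all $t \in [0, T]$. On the other hand, Theorem~\ref{thm1} guarantees that the same function satisfies ${^C_{0}}{D}^{p}_{t} u_{\infty}(t, \chi_{1}^{*}) = f(t, u_{\infty}(t, \chi_{1}^{*})) + \Delta(\chi_{1}^{*})$. Subtracting these two identities forces the vector $\Delta(\chi_{1}^{*})$ to vanish, which is precisely the determining equation (\ref{deteq}).

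I do not anticipate any serious analytic obstacle, since the heavy lifting—the construction of $u_{\infty}$, its uniqueness, and the fact that it satisfies the boundary conditions—has already been carried out in Theorem~\ref{thm1}. The only point requiring a little care is to note that $\Delta(\chi_{1}^{*})$ is constant in $t$, so that the pointwise comparison of the two versions of the Caputo equation in the necessity step yields $\Delta(\chi_{1}^{*}) = 0$ identically rather than merely on a proper subset of $[0, T]$. As an equivalent alternative, one could phrase the whole argument as the special case $\mu = 0$ of Theorem~\ref{thm2}: taking the control parameter to be zero turns the IVP (\ref{eq:Cauchy}) into the original system (\ref{fds}) with $u'(0) = \chi_{1}^{*}$, and Theorem~\ref{thm2} then immediately gives both the equivalence ``solution satisfies (\ref{bcs}) $\iff 0 = \Delta(\chi_{1}^{*})$'' and the identification of the solution with $u_{\infty}(\cdot, \chi_{1}^{*})$.
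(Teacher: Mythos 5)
Your proposal is correct and matches the paper's argument in substance: the paper proves Theorem~\ref{thm3} by invoking Theorem~\ref{thm2} and observing that the perturbed system (\ref{modsys}) coincides with (\ref{fds}) precisely when $\Delta(\chi_{1}^{*})=0$, which is exactly the observation underlying both your direct subtraction argument from Theorem~\ref{thm1} and the $\mu=0$ alternative you mention at the end. No gaps.
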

\begin{proof}
The conditions of Theorem~\ref{thm1} hold, thus we can apply Theorem \ref{thm2} and note that the perturbed equation in (\ref{modsys}) coincides with the original FDS (\ref{fds}) if and only if the point $\chi_{1}^{*}$ satisfies the determining equation (\ref{deteq}). That is, $u_{\infty}( \cdot, \chi_{1}^{*})$ is a solution to the BVP (\ref{fds}), (\ref{bcs}) if and only if (\ref{deteq}) holds. \\
\end{proof}

In the following section we give sufficient and necessary conditions for the existence of solutions to the BVP (\ref{fds}), (\ref{bcs}).

\section{Solvability Analysis}

\begin{lemma}\label{lem3}Suppose the conditions of Theorem~\ref{thm1} are satisfied. Then for arbitrary $m \geq 1$ and $\chi_{1} \in \Omega$ for the exact and approximate determining functions $\Delta: \Omega \rightarrow
 \mathbb{R}^{n}$ and $\Delta_{m}: \Omega \rightarrow
 \mathbb{R}^{n}$, defined by (\ref{delta}) and 
 \begin{equation}
  \begin{gathered}
\label{deteqapp}
     \Delta_{m}(\chi_{1}) :=  \frac{(\alpha_{2} -\alpha_{1} -\chi_{1}T)\Gamma(p+1)}{T^{p}} \\- \frac{p}{T^{p}} \int_{0}^{T}(T-s)^{p-1}f(s, u_{m}(s, \chi_{1}))ds,
\end{gathered}
 \end{equation}
respectively, the inequality
\begin{align}
\label{dineq}
\lvert \Delta(\chi_{1}) - \Delta_{m}(\chi_{1}) \rvert \leq Q^{m}M(I_{n}-Q)^{-1}
\end{align}
holds, where $M,\,\,K$ and $Q$ are given in (\ref{bounded}), (\ref{Lipschitz}), and (\ref{matrix}).\\
\end{lemma}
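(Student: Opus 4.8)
The plan is to estimate $\Delta(\chi_{1})-\Delta_{m}(\chi_{1})$ directly from its definition, reducing the claim to the error estimate already established in part~5 of Theorem~\ref{thm1}. The key observation is that $\Delta$ and $\Delta_{m}$ differ only through the argument of the integral: the algebraic term $(\alpha_{2}-\alpha_{1}-\chi_{1}T)\Gamma(p+1)/T^{p}$ is identical in (\ref{delta}) and (\ref{deteqapp}), so upon subtraction it cancels and we are left with
\begin{equation*}
\Delta(\chi_{1})-\Delta_{m}(\chi_{1}) = -\frac{p}{T^{p}}\int_{0}^{T}(T-s)^{p-1}\bigl[f(s,u_{\infty}(s,\chi_{1}))-f(s,u_{m}(s,\chi_{1}))\bigr]\,ds.
\end{equation*}

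First I would take componentwise absolute values and move them under the integral sign, using that $(T-s)^{p-1}\geq 0$ on $[0,T]$. Next I would apply the Lipschitz condition (\ref{Lipschitz}) to bound the integrand by $K\,\lvert u_{\infty}(s,\chi_{1})-u_{m}(s,\chi_{1})\rvert$, where $K\geq 0$ ensures the matrix multiplication preserves the componentwise inequality. Then I would insert the uniform error estimate (\ref{est}), namely $\lvert u_{\infty}(s,\chi_{1})-u_{m}(s,\chi_{1})\rvert \leq \tfrac{T^{p}}{2^{2p-1}\Gamma(p+1)}Q^{m}(I_{n}-Q)^{-1}M$, which is independent of $s$ and may therefore be factored out of the integral. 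What remains is the elementary integral $\int_{0}^{T}(T-s)^{p-1}\,ds = T^{p}/p$, and collecting the constants yields $\tfrac{K T^{p}}{2^{2p-1}\Gamma(p+1)}Q^{m}(I_{n}-Q)^{-1}M$. Recognising $\tfrac{K T^{p}}{2^{2p-1}\Gamma(p+1)}=Q$ from (\ref{matrix}) collapses this to the sharper bound $Q^{m+1}(I_{n}-Q)^{-1}M$.

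Finally, to reach the stated bound in (\ref{dineq}) I would note that $Q$ and $(I_{n}-Q)^{-1}$ are both nonnegative (the latter via the Neumann series $\sum_{k\geq 0}Q^{k}$, which converges since $r(Q)<1$) and commute, and that $M\geq 0$; since $Q^{m}(I_{n}-Q)^{-1}M-Q^{m+1}(I_{n}-Q)^{-1}M = Q^{m}(I_{n}-Q)(I_{n}-Q)^{-1}M = Q^{m}M \geq 0$, the bound $Q^{m+1}(I_{n}-Q)^{-1}M$ implies $\lvert\Delta(\chi_{1})-\Delta_{m}(\chi_{1})\rvert\leq Q^{m}(I_{n}-Q)^{-1}M$, as claimed. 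I do not anticipate a genuine obstacle here: the argument is a short chain of monotone estimates, and the only points requiring care are the bookkeeping of the componentwise matrix--vector ordering (so that every inequality survives left-multiplication by the nonnegative matrices $K$ and $Q$) and interpreting the right-hand side of (\ref{dineq}) as the vector $Q^{m}(I_{n}-Q)^{-1}M$, consistently with (\ref{est}).
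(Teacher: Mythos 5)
Your proposal is correct and follows essentially the same route as the paper: cancel the common algebraic term, move the absolute value inside the integral, apply the Lipschitz condition (\ref{Lipschitz}), and insert the uniform error estimate (\ref{est}). You actually carry out the final bookkeeping more explicitly than the paper does, observing that the computation yields the sharper bound $Q^{m+1}(I_{n}-Q)^{-1}M$ before relaxing it to the stated $Q^{m}(I_{n}-Q)^{-1}M$ via nonnegativity of $Q^{m}M$.
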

\begin{proof}
Let us fix an arbitrary $\chi_{1} \in \Omega$. Then by virtue of the Lipschitz condition (\ref{Lipschitz}) and the estimates in (\ref{est}) and (\ref{estlem2}), we have

 \begin{equation*}
  \begin{gathered}
    \lvert \Delta(\chi_{1}) - \Delta_{m}(\chi_{1}) \rvert \\= \Biggr \rvert - \frac{p}{T^{p}} \int_{0}^{T} (T - s)^{p-1}f(s, u_{\infty}(s, \chi_{1})) ds
    + \frac{p}{T^{p}} \int_{0}^{T} (T - s)^{p-1}f(s, u_{m}(s, \chi_{1})) ds  \Biggl \rvert \\
    \leq \frac{p}{T^{p}} \int_{0}^{T}(T-s)^{p-1}\lvert f(s, u_{\infty}(s, \chi_{1}))-f(s, u_{m}(s, \chi_{1}))\rvert ds\\
    \leq \frac{p K}{T^{p}} \int_{0}^{T}(T-s)^{p-1}\lvert u_{\infty}(s, \chi_{1})-u_{m}(s, \chi_{1})\rvert ds\\
    \leq Q^{m}M(I_{n}-Q)^{-1}.
 \end{gathered}
 \end{equation*}
The obtained estimate proves the lemma.
\end{proof}

On the basis of the exact and approximate determining equations (\ref{deteq}) and 
\begin{align}
\label{exdeteq}
    \Delta_{m}(\chi_{1}) = 0,
\end{align}
let us introduce the mappings $\Phi: \mathbb{R}^{n} \rightarrow \mathbb{R}^{n}$ and $\Phi_{m}: \mathbb{R}^{n} \rightarrow \mathbb{R}^{n}$, defined by
\begin{subequations}
\begin{align}
    \Phi(\chi_{1}) & := \frac{(\alpha_{2} -\alpha_{1} -\chi_{1}T)\Gamma(p+1)}{T^{p}} - \frac{p}{T^{p}} \int_{0}^{T}(T-s)^{p-1}f(s, u_{\infty}(s, \chi_{1}))ds, \label{phi}\\
    \Phi_{m}(\chi_{1}) & := \frac{(\alpha_{2} -\alpha_{1} -\chi_{1}T)\Gamma(p+1)}{T^{p}} - \frac{p}{T^{p}} \int_{0}^{T}(T-s)^{p-1}f(s, u_{m}(s, \chi_{1}))ds, \label{phim}
\end{align}
\end{subequations}
and recall the following definition presented in \cite{RontoMarynets}:\\

\begin{definition} Let $H \subset \mathbb{R}^{n}$ be a non-empty set. For any pair of functions 
\begin{align*}
    f_{j} = col(f_{j,1}(x),...,f_{j,n}(x)): H \rightarrow \mathbb{R}^{n},\,\,\, j = 1,2
\end{align*}
the following statement holds 
\begin{align*}
    f_{1} \triangleright_{\tiny{H}} f_{2} 
\end{align*}
if and only if there exists a function $k : H \rightarrow \{1, 2, .... ,n \}$, such that 
\begin{align*}
    f_{1,k(x)} > f_{2,k(x)}
\end{align*}
for all $x \in H$. It means that at least one of the components of $f_{1}(x)$ is less than the appropriate component of $f_{2}(x)$ in every point in $H$.\\
\end{definition}
\begin{theorem}\label{thm4} Suppose the conditions of Theorem~\ref{thm1} hold, and one can find an $m \geq 1$ and a set $\Omega$, such that
\begin{align}
\label{map}
    \Phi_{m} \triangleright_{\partial \Omega }  Q^{m}M(I_{n}-Q)^{-1}.
\end{align}
If the Brouwer degree of the mapping $\Phi_{m}$ satisfies  \\
\begin{align}
\label{brouwer}
    \text{deg}(\Phi_{m}, \Omega, 0) \neq 0,
\end{align}
then there exists a point $\chi_{1}^{*} \in \Omega$, such that 
\begin{align}
\label{sol}
    u_{\infty}(t) = u_{\infty}(t, \chi_{1}^{*}) = \lim_{m \rightarrow \infty} u_{m}(t, \chi_{1}^{*})
\end{align}
is a solution to the BVP (\ref{fds}), (\ref{bcs}) satisfying 
\begin{align}
\label{chistar}
    u{'}_{\infty}(0) = \chi_{1}^{*} \in \Omega.
\end{align}
\end{theorem}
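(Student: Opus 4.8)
The plan is to reduce the existence of a solution of the BVP to the existence of a zero of the exact determining map $\Phi$ inside $\Omega$, and then to produce such a zero by a degree-theoretic homotopy argument that transfers the nonvanishing degree of the \emph{computable} map $\Phi_{m}$ to the (only implicitly known) map $\Phi$. Note first that $\Phi = \Delta$, so by Theorem~\ref{thm3} it suffices to exhibit a point $\chi_{1}^{*} \in \Omega$ with $\Delta(\chi_{1}^{*}) = 0$: once this is done, Theorem~\ref{thm3} guarantees that $u_{\infty}(\cdot, \chi_{1}^{*})$ is a solution of (\ref{fds}), (\ref{bcs}), and by construction of the sequence (\ref{seq2}) one has $u'_{\infty}(0) = \chi_{1}^{*}$, which is exactly (\ref{sol})--(\ref{chistar}). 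Thus the entire burden of the proof is to show that $\Phi$ vanishes somewhere in $\Omega$.

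Before invoking the degree I would record that both $\Phi$ and $\Phi_{m}$ are continuous on $\overline{\Omega}$: $\Phi_{m}$ inherits continuity in $\chi_{1}$ from the integral representation (\ref{seq2}), while $\Phi$ is continuous because the error estimate (\ref{est}) is uniform in $\chi_{1}$, so $u_{m}(s, \cdot) \to u_{\infty}(s, \cdot)$ uniformly and the limit of continuous maps is continuous. This legitimizes the Brouwer degrees of both maps. Now introduce the linear homotopy
\[
  H(\chi_{1}, \lambda) := \Phi_{m}(\chi_{1}) + \lambda\bigl(\Phi(\chi_{1}) - \Phi_{m}(\chi_{1})\bigr), \qquad \lambda \in [0, 1],
\]
joining $\Phi_{m} = H(\cdot, 0)$ to $\Phi = H(\cdot, 1)$. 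The heart of the argument is to check that $H(\chi_{1}, \lambda) \neq 0$ for every $\chi_{1} \in \partial \Omega$ and every $\lambda \in [0,1]$. Fix $\chi_{1} \in \partial \Omega$ and let $k = k(\chi_{1})$ be the component index furnished by the relation (\ref{map}), so that $\bigl(\Phi_{m}(\chi_{1})\bigr)_{k} > \bigl(Q^{m}M(I_{n}-Q)^{-1}\bigr)_{k}$. Writing $d := \bigl(\Phi(\chi_{1})\bigr)_{k} - \bigl(\Phi_{m}(\chi_{1})\bigr)_{k}$, the componentwise estimate (\ref{dineq}) of Lemma~\ref{lem3} gives $|d| \leq \bigl(Q^{m}M(I_{n}-Q)^{-1}\bigr)_{k}$, whence for all $\lambda \in [0,1]$
\[
  \bigl(H(\chi_{1}, \lambda)\bigr)_{k} = \bigl(\Phi_{m}(\chi_{1})\bigr)_{k} + \lambda d \geq \bigl(\Phi_{m}(\chi_{1})\bigr)_{k} - |d| \geq \bigl(\Phi_{m}(\chi_{1})\bigr)_{k} - \bigl(Q^{m}M(I_{n}-Q)^{-1}\bigr)_{k} > 0.
\]
Hence this single (point-dependent) component of $H$ stays strictly positive along the whole segment, so $H$ cannot vanish on $\partial \Omega \times [0,1]$.

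With the homotopy admissible, invariance of the Brouwer degree yields $\deg(\Phi, \Omega, 0) = \deg(\Phi_{m}, \Omega, 0) \neq 0$ by hypothesis (\ref{brouwer}). The standard solution property of a nonzero degree then furnishes a point $\chi_{1}^{*} \in \Omega$ with $\Phi(\chi_{1}^{*}) = \Delta(\chi_{1}^{*}) = 0$, and applying Theorem~\ref{thm3} to this $\chi_{1}^{*}$ completes the proof.

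The step I expect to be the main obstacle is the boundary nonvanishing check, because it is precisely there that the two hypotheses must be made to cooperate: the per-point, component-selecting nature of the relation $\triangleright_{\partial \Omega}$ (the index $k$ is allowed to vary with $\chi_{1}$) has to be matched against the componentwise perturbation bound of Lemma~\ref{lem3}. The conceptual crux is recognizing that keeping a \emph{single}, possibly $\chi_{1}$-dependent, component of $H$ away from zero already forces $H(\chi_{1}, \lambda) \neq 0$, and that this varying index causes no difficulty for degree invariance; once this is understood the estimate itself is short.
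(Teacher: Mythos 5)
Your proof is correct and follows essentially the same route as the paper's: the linear homotopy between $\Phi_{m}$ and $\Phi$, non-degeneracy on $\partial\Omega$ via the estimate of Lemma~\ref{lem3} combined with hypothesis (\ref{map}), invariance of the Brouwer degree, and then Theorem~\ref{thm3} to convert the zero of $\Phi=\Delta$ into a solution of the BVP. Your explicit componentwise treatment of the $\chi_{1}$-dependent index $k(\chi_{1})$ is in fact slightly more careful than the paper's appeal to $\lvert P(\theta,\chi_{1})\rvert \geq \lvert\Phi_{m}\rvert - \lvert\Phi-\Phi_{m}\rvert$, but the substance is identical.
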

\begin{proof}
We first show that the vector fields $\Phi$ and $\Phi_{m}$ are homotopic. Let us introduce the family of vector mappings
\begin{align}
\label{vecmaps}
    P(\theta, \chi_{1}) = \Phi_{m}(\chi_{1}) + \theta [\Phi(\chi_{1}) - \Phi_{m}(\chi_{1}) ],\,\,\, \chi_{1} \in \partial \Omega,\,\,\, \theta \in [0,1].
\end{align}
Then $P(\theta, \chi_{1})$ is continuous for all $\chi_{1} \in \partial \Omega,\,\, \theta \in [0,1]$. We have  
\begin{align*}
    P(0, \chi_{1}) = \Phi_{m}(\chi_{1}),\,\,\, P(1, \chi_{1})  = \Phi(\chi_{1})
\end{align*}
and for any $\chi_{1} \in \Omega$, 
\begin{align}
\begin{split}
\label{Pmap}
    \lvert P(\theta, \chi_{1}) \rvert =& \lvert \Phi_{m}(\chi_{1}) + \theta [\Phi(\chi_{1}) - \Phi_{m}(\chi_{1}) ]\rvert \\
    \geq& \lvert \Phi_{m}(\chi_{1}) \rvert - \lvert \Phi(\chi_{1}) - \Phi_{m}(\chi_{1}) \rvert.
\end{split}
\end{align}
From the other side, by virtue of (\ref{phi}), (\ref{phim}) we have 
\begin{align}
\label{mapbound}
\lvert \Phi(\chi_{1}) - \Phi_{m}(\chi_{1}) \rvert \leq Q^{m}M(I_{n}-Q)^{-1}.
\end{align}
From (\ref{map}), (\ref{Pmap}), and (\ref{mapbound}) it follows that
\begin{align*}
    \lvert P(\theta, \chi_{1}) \rvert \triangleright_{\partial \Omega} 0,\,\,\, \theta \in [0,1],
\end{align*}
which means that $P(\theta, \chi_{1}) \neq 0$ for all $\theta \in [0,1]$ and $\chi_{1} \in \Omega$, i.e. the mappings (\ref{vecmaps}) are non-degenerate, and thus the vector fields $\Phi$ and $\Phi_{m}$ are homotopic. Since relation (\ref{brouwer}) holds and the Brouwer degree is preserved under homotopies, it follows that
\begin{align*}
      \text{deg}(\Phi, \Omega, 0) = \text{deg}(\Phi_{m}, \Omega, 0) \neq 0.
\end{align*}
which implies that there exists $\chi_{1}^{*} \in \Omega$ such that $\Phi(\chi_{1}^{*}) = 0$ by the classical topological result in \cite{farkas}. \\
\indent Hence, the point $\chi_{1}^{*}$ satisfies the determining equation (\ref{deteq}). \\
\indent By Theorem \ref{thm3} it follows that the function defined in (\ref{sol}) is a solution to the original BVP with the Dirichlet boundary conditions (\ref{fds}), (\ref{bcs}) and satisfies the initial condition (\ref{chistar}).
\end{proof}

\begin{lemma}\label{lem4}Suppose the conditions of Theorem~\ref{thm1} are satisfied. Then the limit function $u_{\infty}(t, \chi_{1})$ satisfies the Lipschitz-type condition of the form
\begin{align}
\label{limlip}
\lvert u_{\infty}(t, \chi_{1}^{0}) - u_{\infty}(t, \chi_{1}^{1}) \rvert \leq \Big[ R + \alpha_{1}(t)R(I_{n}-Q)^{-1}\Big] \lvert \chi_{1}^{0}-\chi_{1}^{1} \rvert,
\end{align}
where
\begin{align}
\label{R}
    R := \sup_{t \in [0,T]} \Biggl \lvert t - T\Big(\frac{t}{T}\Big)^{p}\Biggr \rvert.
\end{align}
\end{lemma}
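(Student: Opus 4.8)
The plan is to use the integral-equation characterization of the limit function. By part~4 of Theorem~\ref{thm1}, for each fixed value of the parameter the function $u_{\infty}(\cdot,\chi_{1})$ is the unique continuous solution of (\ref{inteq}); I would write (\ref{inteq}) once for $\chi_{1}^{0}$ and once for $\chi_{1}^{1}$ and subtract. The parameter enters the non-integral part of (\ref{inteq}) only through $\chi_{1}t+(\alpha_{2}-\alpha_{1}-\chi_{1}T)(t/T)^{p}$, whose $\chi_{1}$-dependent portion is $\chi_{1}\bigl[t-T(t/T)^{p}\bigr]$. Hence the difference of the free terms equals $(\chi_{1}^{0}-\chi_{1}^{1})\bigl[t-T(t/T)^{p}\bigr]$, which by the definition (\ref{R}) of $R$ is bounded in modulus by $R\,\lvert\chi_{1}^{0}-\chi_{1}^{1}\rvert$. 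This accounts for the first summand in (\ref{limlip}).

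For the integral part I would apply Lemma~\ref{lemma1} with $a=0$, $b=T$ to the continuous integrand $g(s):=f(s,u_{\infty}(s,\chi_{1}^{0}))-f(s,u_{\infty}(s,\chi_{1}^{1}))$, bounding the bracketed expression by $\alpha_{1}(t)\max_{0\le s\le T}\lvert g(s)\rvert$, and then invoke the Lipschitz condition (\ref{Lipschitz}) to get $\lvert g(s)\rvert\le K\,\lvert u_{\infty}(s,\chi_{1}^{0})-u_{\infty}(s,\chi_{1}^{1})\rvert$. Writing $w(t):=\lvert u_{\infty}(t,\chi_{1}^{0})-u_{\infty}(t,\chi_{1}^{1})\rvert$ and $W:=\max_{0\le s\le T}w(s)$, and using the componentwise monotonicity of multiplication by the nonnegative matrix $K$, this combines into the self-referential pointwise estimate
\begin{align*}
w(t)\le R\,\lvert\chi_{1}^{0}-\chi_{1}^{1}\rvert+\alpha_{1}(t)\,K\,W .
\end{align*}

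Next I would convert this into a bound on $W$ alone. Taking the maximum over $t\in[0,T]$ and using that $\max_{t}\alpha_{1}(t)=T^{p}/(2^{2p-1}\Gamma(p+1))$, so that $(\max_{t}\alpha_{1}(t))K=Q$ by (\ref{matrix}), gives $W\le R\,\lvert\chi_{1}^{0}-\chi_{1}^{1}\rvert+QW$, i.e. $(I_{n}-Q)W\le R\,\lvert\chi_{1}^{0}-\chi_{1}^{1}\rvert$. Here the hypothesis $r(Q)<1$ together with $Q\ge0$ is decisive: it guarantees that $I_{n}-Q$ is invertible with $(I_{n}-Q)^{-1}=\sum_{k\ge0}Q^{k}\ge0$, so that left-multiplying the vector inequality by this nonnegative matrix preserves its direction and yields $W\le R(I_{n}-Q)^{-1}\lvert\chi_{1}^{0}-\chi_{1}^{1}\rvert$. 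Substituting this bound on $W$ back into the pointwise estimate then produces the two-summand bound (\ref{limlip}), the first term being the free-part contribution and the second the fed-back integral contribution carrying the weight $\alpha_{1}(t)$ and the resolvent $(I_{n}-Q)^{-1}$.

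The routine ingredients---continuity of $g$, applicability of Lemma~\ref{lemma1}, and the value of $\max_{t}\alpha_{1}(t)$---cause no trouble. The step I expect to require the most care is the passage from the implicit inequality $(I_{n}-Q)W\le R\,\lvert\chi_{1}^{0}-\chi_{1}^{1}\rvert$ to the explicit bound on $W$: one has to work with componentwise vector inequalities and justify that monotonicity survives left-multiplication by $(I_{n}-Q)^{-1}$, which is precisely where $r(Q)<1$ and the sign condition on $Q$ enter. If one prefers to avoid the implicit inequality, an equivalent route is to prove the analogous two-term estimate for each iterate $u_{m}$ of (\ref{seq2}) by induction---feeding the bound $\max_{s}\lvert u_{m-1}(s,\chi_{1}^{0})-u_{m-1}(s,\chi_{1}^{1})\rvert\le\sum_{k=0}^{m-1}Q^{k}R\,\lvert\chi_{1}^{0}-\chi_{1}^{1}\rvert$ into the recursion---and then pass to the limit $m\to\infty$ using the uniform convergence established in Theorem~\ref{thm1}, the geometric series again collapsing to $(I_{n}-Q)^{-1}$.
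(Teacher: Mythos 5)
Your route is genuinely different from the paper's. The paper never touches the integral equation for the limit function directly: it proves, by induction on $m$, an estimate of the form $\lvert u_{m}(t,\chi_{1}^{0})-u_{m}(t,\chi_{1}^{1})\rvert\le\bigl[R+\sum_{i=1}^{m-1}K^{i}R\,\alpha_{i}(t)+K^{m}\alpha_{m}(t)\bigr]\lvert\chi_{1}^{0}-\chi_{1}^{1}\rvert$ for the iterates of (\ref{seq2}), collapses the sum via Lemma~\ref{lemma2} into $R\,\alpha_{1}(t)(I_{n}-Q)^{-1}$ plus a $Q^{m}$ tail, and lets $m\to\infty$ --- i.e.\ exactly the ``alternative route'' you sketch in your closing sentences. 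Your main argument instead works with $u_{\infty}$ itself through (\ref{inteq}) and closes the self-referential inequality $w(t)\le R\,\lvert\chi_{1}^{0}-\chi_{1}^{1}\rvert+\alpha_{1}(t)KW$ by inverting $I_{n}-Q$. This is cleaner (no induction, no bookkeeping of the $\alpha_{i}$), and your justification of the inversion step --- $(I_{n}-Q)^{-1}=\sum_{k\ge0}Q^{k}\ge0$, so componentwise inequalities survive left-multiplication --- is precisely the point that needs care and is handled correctly; the evaluation $\max_{t}\alpha_{1}(t)=T^{p}/(2^{2p-1}\Gamma(p+1))$, hence $(\max_{t}\alpha_{1}(t))K=Q$, is also right.

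The one place to be honest about is the final identification with (\ref{limlip}). Substituting $W\le(I_{n}-Q)^{-1}R\,\lvert\chi_{1}^{0}-\chi_{1}^{1}\rvert$ into your pointwise estimate gives $w(t)\le\bigl[R+\alpha_{1}(t)KR(I_{n}-Q)^{-1}\bigr]\lvert\chi_{1}^{0}-\chi_{1}^{1}\rvert$, whose second summand carries an extra factor $K$ compared with (\ref{limlip}); since $Q=cK$ with $c=T^{p}/(2^{2p-1}\Gamma(p+1))$, your bound implies the stated one only when $K(I_{n}-Q)^{-1}\le(I_{n}-Q)^{-1}$, e.g.\ when $c\ge1$. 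So either state the conclusion with $K(I_{n}-Q)^{-1}$ or record that comparison as an extra step. Note, however, that the paper's own induction, carried out carefully, produces exactly the same $KQ^{i-1}R\,\alpha_{1}(t)$ terms (the printed step $K^{i}R\,\alpha_{i}(t)\le Q^{i}R\,\alpha_{1}(t)$ silently trades a factor $K$ for $Q$), so this is a constant-tracking wrinkle you share with the paper rather than a defect peculiar to your approach; the structure of your argument is sound.
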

\begin{proof}
 Using (\ref{seq2}) for $m =1$, we find that
\begin{equation*}
\begin{gathered}
\lvert  u_{1}(t, \chi_{1}^{0}) - u_{1}(t, \chi_{1}^{1}) \rvert \leq \lvert \chi_{1}^{0}-\chi_{1}^{1}\rvert R\\
+ \frac{1}{\Gamma(p)}  \int_{0}^{t}\Big[(t-s)^{p-1}-(T-s)^{p-1}\Big(\frac{t}{T}\Big)^{p}\Big]\lvert f(s, u_{0}(s, \chi_{1}^{0}))-f(s, u_{0}(s, \chi_{1}^{1}))\rvert ds \\ 
+\frac{1}{\Gamma(p)} \Big(\frac{t}{T}\Big)^{p} \int_{t}^{T}(T-s)^{p-1}\lvert f(s, u_{0}(s, \chi_{1}^{0}))-f(s, u_{0}(s, \chi_{1}^{1}))\rvert ds  \\
\leq  \lvert \chi_{1}^{0}-\chi_{1}^{1}\rvert R+ \frac{K}{\Gamma(p)}\int_{0}^{t}\Big[(t-s)^{p-1}-(T-s)^{p-1}\Big(\frac{t}{T}\Big)^{p}\Big]\lvert u_{0}(s, \chi_{1}^{0})-u(s, \chi_{1}^{1})\rvert ds \\ 
+\frac{K}{\Gamma(p)} \Big(\frac{t}{T}\Big)^{p} \int_{t}^{T}(T-s)^{p-1}\lvert u_{0}(s, \chi_{1}^{0})-u(s, \chi_{1}^{1})\rvert ds  \\
\leq \lvert \chi_{1}^{0}-\chi_{1}^{1}\rvert R + \frac{KR}{\Gamma(p)}\lvert \chi_{1}^{0}-\chi_{1}^{1}\rvert \int_{0}^{t}\Big[(t-s)^{p-1}-(T-s)^{p-1}\Big(\frac{t}{T}\Big)^{p}\Big]ds \\ 
+\frac{KR}{\Gamma(p)}\lvert \chi_{1}^{0}-\chi_{1}^{1}\rvert \Big(\frac{t}{T}\Big)^{p} \int_{t}^{T}(T-s)^{p-1}ds   = \lvert \chi_{1}^{0}-\chi_{1}^{1}\rvert R + KR \alpha_{1}(t)\lvert \chi_{1}^{0}-\chi_{1}^{1}\rvert 
\end{gathered}
\end{equation*}
holds for all $t \in [0, T]$, where the matrix $K$ and vector $R$ are defined in (\ref{Lipschitz}) and (\ref{R}), and the function $\alpha_{1}(t)$ is defined in (\ref{alpha1t}). Analogously, for $m=2$ we find 
\begin{equation*}
\begin{gathered}
     \lvert  u_{2}(t, \chi_{1}^{0})-u_{2}(t, \chi_{1}^{1})\rvert \\
     \leq
    \lvert \chi_{1}^{0}-\chi_{1}^{1} \rvert R + \frac{K}{\Gamma(p)}\int_{0}^{t}\Big[(t-s)^{p-1}-(T-s)^{p-1}\Big(\frac{t}{T}\Big)^{p}\Big]\lvert u_{1}(t, \chi_{1}^{0})-u_{1}(t, \chi_{1}^{1})\rvert ds \\
    +\frac{K}{\Gamma(p)} \Big(\frac{t}{T}\Big)^{p} \int_{t}^{T}(T-s)^{p-1}\lvert u_{1}(t, \chi_{1}^{0})-u_{1}(t, \chi_{1}^{1})\rvert ds\\
    = [R + KR\alpha_{1}(t) +K^{2}\alpha_{2}(t)]\lvert \chi_{1}^{0}-\chi_{1}^{1}\rvert.
\end{gathered}
\end{equation*}
By induction we get:
\begin{equation*}
\begin{gathered}
     \lvert u_{m}(t, \chi_{1}^{0})-u_{m}(t, \chi_{1}^{1}) \rvert \\
     \leq \Big[R + \sum_{i=1}^{m-1}K^{i}R\alpha_{i}(t) + K^{m}\alpha_{m}(t) \Big]\lvert \chi_{1}^{0}-\chi_{1}^{1}\rvert \\
      \leq \Big[R + \sum_{i=1}^{m-1}Q^{i}R\alpha_{1}(t) + Q^{m} \Big]\lvert \chi_{1}^{0}-\chi_{1}^{1}\rvert \\
    \leq \Big[ R + R \alpha_{1}(t)(I_{n}-Q)^{-1} + Q^{m} \Big] \lvert \chi_{1}^{0}-\chi_{1}^{1}\rvert ,
\end{gathered}
\end{equation*}
and passing to the limit $m \rightarrow \infty$ in the inequality above yields 
\begin{align*}
    \lvert u_{\infty}(t, \chi_{1}^{0}) - u_{\infty}(t, \chi_{1}^{1})\rvert \leq \Big[ R + \alpha_{1}(t)R(I_{n}-Q)^{-1}\Big] \lvert \chi_{1}^{0}-\chi_{1}^{1} \rvert,
\end{align*}
as required.
\end{proof}

\begin{lemma}\label{lem5}Suppose the conditions of Theorem~\ref{thm1} are satisfied. Then the function $\Delta: \Omega \rightarrow \mathbb{R}^{n}$ satisfies the following estimate:
\begin{align}
\label{lemma7}
    \lvert \Delta(\chi_{1}^{0}) - \Delta(\chi_{1}^{1})\rvert \leq& \frac{\Gamma(p+1)}{T^{p-1}}\lvert\chi_{1}^{0}-\chi_{1}^{1}\rvert + \Big( KR +QR(I_{n}-Q)^{-1}\Big)\lvert\chi_{1}^{0}-\chi_{1}^{1}\rvert.
\end{align}
\end{lemma}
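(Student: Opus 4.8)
The plan is to start directly from the definition (\ref{delta}) of $\Delta$ and estimate the difference $\Delta(\chi_{1}^{0})-\Delta(\chi_{1}^{1})$ term by term. Writing out both evaluations and subtracting, the algebraic prefactor $\frac{(\alpha_{2}-\alpha_{1}-\chi_{1}T)\Gamma(p+1)}{T^{p}}$ contributes only through its $\chi_{1}$-dependence: the $\alpha_{2}-\alpha_{1}$ parts cancel and what survives is exactly $-\frac{\Gamma(p+1)}{T^{p-1}}(\chi_{1}^{0}-\chi_{1}^{1})$, whose modulus is the first term on the right-hand side of (\ref{lemma7}). It then remains to control the integral contribution.

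For the integral term I would combine the two integrals, apply the triangle inequality to reduce to $\frac{p}{T^{p}}\int_{0}^{T}(T-s)^{p-1}\lvert f(s,u_{\infty}(s,\chi_{1}^{0}))-f(s,u_{\infty}(s,\chi_{1}^{1}))\rvert\,ds$, and then invoke the Lipschitz condition (\ref{Lipschitz}) to replace the integrand by $K\lvert u_{\infty}(s,\chi_{1}^{0})-u_{\infty}(s,\chi_{1}^{1})\rvert$. At this point the key input is Lemma~\ref{lem4}, which bounds $\lvert u_{\infty}(s,\chi_{1}^{0})-u_{\infty}(s,\chi_{1}^{1})\rvert$ by $\big[R+\alpha_{1}(s)R(I_{n}-Q)^{-1}\big]\lvert\chi_{1}^{0}-\chi_{1}^{1}\rvert$. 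Substituting this splits the estimate into an ``$R$-part'' and an ``$\alpha_{1}$-part''.

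The $R$-part only requires $\int_{0}^{T}(T-s)^{p-1}\,ds=T^{p}/p$, which combines with the prefactor $\frac{pK}{T^{p}}$ to produce precisely $KR$. The $\alpha_{1}$-part is where the computation must be handled carefully: here I would bound $\alpha_{1}(s)$ by its maximum over $[0,T]$. Since $\alpha_{1}(s)=\frac{2}{\Gamma(p+1)T^{p}}[s(T-s)]^{p}$ is maximized at $s=T/2$, one obtains $\alpha_{1}(s)\le\frac{T^{p}}{2^{2p-1}\Gamma(p+1)}$, the same constant already used throughout the proof of Theorem~\ref{thm1}. Pulling this constant out and again using $\int_{0}^{T}(T-s)^{p-1}\,ds=T^{p}/p$, the $\alpha_{1}$-part collapses to $\frac{KT^{p}}{2^{2p-1}\Gamma(p+1)}R(I_{n}-Q)^{-1}=QR(I_{n}-Q)^{-1}$ by the definition (\ref{matrix}) of $Q$. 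Adding the three contributions yields (\ref{lemma7}).

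The main obstacle is confined to this last step: recognizing that the factor $\frac{T^{p}}{2^{2p-1}\Gamma(p+1)}$ produced by estimating $\alpha_{1}$ at its maximum is exactly what converts the Lipschitz matrix $K$ into the contraction matrix $Q$, so that the bound assembles into the stated closed form rather than into an unwieldy Beta-function expression (the exact integral $\int_{0}^{T}(T-s)^{p-1}\alpha_{1}(s)\,ds$ does not itself simplify to the desired constant). One must also keep the componentwise conventions consistent, noting in particular that $R$ is a scalar while $K$, $Q$ and $(I_{n}-Q)^{-1}$ are matrices, so that their ordering is immaterial up to these conventions.
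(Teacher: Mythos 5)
Your proposal is correct and follows essentially the same route as the paper: subtract the two evaluations of $\Delta$ so that the linear term yields $\frac{\Gamma(p+1)}{T^{p-1}}\lvert\chi_{1}^{0}-\chi_{1}^{1}\rvert$, apply the Lipschitz condition (\ref{Lipschitz}) to the integral term, and then invoke Lemma~\ref{lem4}. The paper compresses the final assembly into one line, whereas you correctly spell out the step it leaves implicit, namely bounding $\alpha_{1}(s)$ by its maximum $\frac{T^{p}}{2^{2p-1}\Gamma(p+1)}$ so that the prefactor $\frac{pK}{T^{p}}\int_{0}^{T}(T-s)^{p-1}\,ds$ turns the $\alpha_{1}$-part into $QR(I_{n}-Q)^{-1}$.
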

\begin{proof}
From (\ref{delta}) we have 
\begin{equation*}
\begin{gathered}
    \Delta(\chi_{1}^{0}) - \Delta(\chi_{1}^{1}) = \frac{\Gamma(p+1)}{T^{p-1}}(\chi_{1}^{1}-\chi_{1}^{0}) \\
    + \frac{p}{T^{p}} \int_{0}^{T}(T-s)^{p-1}[f(s, u_{\infty}(s, \chi_{1}^{1})) - f(s, u_{\infty}(s, \chi_{1}^{0}))]ds.
  \end{gathered}
\end{equation*}

Applying (\ref{Lipschitz}) and (\ref{limlip}) yields
\begin{equation*}
\begin{gathered}
     \lvert  \Delta(\chi_{1}^{0}) - \Delta(\chi_{1}^{1})\rvert \leq \frac{\Gamma(p+1)}{T^{p-1}}\lvert\chi_{1}^{0}-\chi_{1}^{1}\rvert\\
     + \frac{pK}{T^{p}} \int_{0}^{T}(T-s)^{p-1}\lvert u_{\infty}(s, \chi_{1}^{0}) - u_{\infty}(s, \chi_{1}^{1})\rvert ds\\
     \leq \frac{\Gamma(p+1)}{T^{p-1}}\lvert\chi_{1}^{0}-\chi_{1}^{1}\rvert + \Big(KR + QR(I_{n}-Q)^{-1} \Big)\lvert \chi_{1}^{0}-\chi_{1}^{1}\rvert,
  \end{gathered}
\end{equation*}
as required.
\end{proof}

\begin{theorem}\label{thm5}Suppose the conditions of Theorem~\ref{thm1} are satisfied. Then in order for the domain $\Omega$ to contain a point $\chi_{1} = \chi_{1}^{*}$, which determines the value of the first derivative, $u'(0, \chi_{1}^{*})$, of the solution $u(t, \chi_{1})$ of the BVP (\ref{fds}), (\ref{bcs}) at $t = 0$, it is necessary that for all $m \geq 1,$ $\tilde{\chi_{1}} \in \Omega$, the following inequality holds:
\begin{equation*}
\begin{gathered}
    \lvert \Delta_{m}(\tilde{\chi_{1}})\rvert \leq \sup_{\chi_{1} \in \Omega} \Big[KR + \frac{QR}{1-Q} + \frac{\Gamma(p+1)}{T^{p-1}} \Big]\lvert \chi_{1} - \tilde{\chi_{1}}\rvert + \frac{Q^{m}M}{1-Q}.
  \end{gathered}
\end{equation*}
\end{theorem}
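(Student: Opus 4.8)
The plan is to prove this as a \emph{necessary} condition: assume that $\Omega$ contains such a point $\chi_{1}^{*}$ and then bound $\lvert \Delta_{m}(\tilde{\chi_{1}})\rvert$ from above for an arbitrary $\tilde{\chi_{1}} \in \Omega$ and $m \geq 1$. The single place where the hypothesis enters is through Theorem~\ref{thm3}: the existence of $\chi_{1}^{*} \in \Omega$ for which $u_{\infty}(\cdot, \chi_{1}^{*})$ solves the BVP (\ref{fds}), (\ref{bcs}) is equivalent to $\chi_{1}^{*}$ satisfying the determining equation, i.e. $\Delta(\chi_{1}^{*}) = 0$. This lets me rewrite $\Delta_{m}(\tilde{\chi_{1}})$ as the difference $\Delta_{m}(\tilde{\chi_{1}}) - \Delta(\chi_{1}^{*})$ without changing its value.

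The core of the argument is then to insert the intermediate term $\Delta(\tilde{\chi_{1}})$ and apply the triangle inequality componentwise:
\begin{align*}
\lvert \Delta_{m}(\tilde{\chi_{1}})\rvert = \lvert \Delta_{m}(\tilde{\chi_{1}}) - \Delta(\chi_{1}^{*})\rvert \leq \lvert \Delta_{m}(\tilde{\chi_{1}}) - \Delta(\tilde{\chi_{1}})\rvert + \lvert \Delta(\tilde{\chi_{1}}) - \Delta(\chi_{1}^{*})\rvert.
\end{align*}
The two resulting terms are precisely what the preceding lemmas control. For the first term I would invoke Lemma~\ref{lem3} at $\chi_{1} = \tilde{\chi_{1}}$, giving $\lvert \Delta_{m}(\tilde{\chi_{1}}) - \Delta(\tilde{\chi_{1}})\rvert \leq Q^{m}M(I_{n}-Q)^{-1}$. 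For the second term I would apply Lemma~\ref{lem5} with $\chi_{1}^{0} = \tilde{\chi_{1}}$ and $\chi_{1}^{1} = \chi_{1}^{*}$, obtaining
\begin{align*}
\lvert \Delta(\tilde{\chi_{1}}) - \Delta(\chi_{1}^{*})\rvert \leq \Big[KR + QR(I_{n}-Q)^{-1} + \frac{\Gamma(p+1)}{T^{p-1}}\Big]\lvert \tilde{\chi_{1}} - \chi_{1}^{*}\rvert.
\end{align*}

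To finish, since $\chi_{1}^{*}$ is a point of $\Omega$, I would bound the second contribution uniformly by passing to the supremum over $\chi_{1} \in \Omega$, replacing $\lvert \tilde{\chi_{1}} - \chi_{1}^{*}\rvert$ by $\sup_{\chi_{1} \in \Omega}\lvert \chi_{1} - \tilde{\chi_{1}}\rvert$ inside the coefficient; rewriting $Q^{m}M(I_{n}-Q)^{-1}$ and $QR(I_{n}-Q)^{-1}$ in the shorthand $Q^{m}M/(1-Q)$ and $QR/(1-Q)$ then yields exactly the asserted inequality. The only subtlety — not really an obstacle — is that $\chi_{1}^{*}$ is known to exist but is not given explicitly, so the bound must be made independent of it; this is precisely why the statement is phrased with a supremum over $\Omega$, which safely absorbs the unknown $\chi_{1}^{*}$. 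All matrix/vector manipulations are routine once one recalls that the inequalities are read componentwise and that $(I_{n}-Q)^{-1} = \sum_{k \geq 0} Q^{k}$ has nonnegative entries because $Q \geq 0$ and $r(Q) < 1$.
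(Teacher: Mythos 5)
Your proposal is correct and follows essentially the same route as the paper's own proof: the hypothesis enters via Theorem~\ref{thm3} to give $\Delta(\chi_{1}^{*})=0$, the triangle inequality splits $\lvert\Delta_{m}(\tilde{\chi_{1}})\rvert$ into the terms controlled by Lemma~\ref{lem3} and Lemma~\ref{lem5}, and the supremum over $\Omega$ absorbs the unknown $\chi_{1}^{*}$. No substantive differences.
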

\begin{proof}
Assume that the determining function $\Delta(\chi_{1})$ vanishes at $\chi_{1} = \chi_{1}^{*}$, i.e. ${\Delta(\chi_{1}^{*}) = 0}$. Then, according to Theorem \ref{thm3}, the initial value of the first derivative of the solution of BVP (\ref{fds}), (\ref{bcs}), is given by $u'(0) = \chi_{1}^{*}$.\\

Let us apply Lemma \ref{lem5}, where $\chi_{1}^{0} = \tilde{\chi_{1}}$ and $\chi_{1}^{1} = \chi_{1}^{*}$:
\begin{equation*}
    \lvert \Delta(\tilde{\chi_{1}})-\Delta(\chi_{1}^{*})\rvert  = \lvert \Delta(\tilde{\chi_{1}}) \rvert 
    \leq \Big[KR + \frac{QR}{1-Q} + \frac{\Gamma(p+1)}{T^{p-1}} \Big]\lvert \tilde{\chi_{1}} - \chi_{1}^{*}\rvert.
\end{equation*}

By Lemma \ref{lem3}, it follows that
\begin{equation*}
    \lvert \Delta(\tilde{\chi_{1}}) - \Delta_{m}(\tilde{\chi_{1}} \rvert \leq \frac{Q^{m}M}{1-Q},
\end{equation*}
thus,
\begin{equation*}
\begin{gathered}
    \lvert \Delta_{m}(\tilde{\chi_{1}}) \rvert \leq \lvert \Delta(\tilde{\chi_{1}})\rvert  + \lvert \Delta_{m}(\tilde{\chi_{1}}) - \Delta(\tilde{\chi_{1}})\rvert \\
    \leq \Big[KR + \frac{QR}{1-Q} + \frac{\Gamma(p+1)}{T^{p-1}} \Big]\lvert \tilde{\chi_{1}} - \chi_{1}^{*}\rvert + \frac{Q^{m}M}{1-Q}\\
    \leq \sup_{\chi_{1} \in \Omega}  \Big[KR + \frac{QR}{1-Q} + \frac{\Gamma(p+1)}{T^{p-1}} \Big]\lvert \tilde{\chi_{1}} - \chi_{1}\rvert + \frac{Q^{m}M}{1-Q}.\\
  \end{gathered}
\end{equation*}
This proves the theorem.
\end{proof}

\begin{remark}
On the basis of Theorem \ref{thm5}, we can establish an algorithm of approximate search for the point $\chi_{1}^{*}$, which defines the solution $u(\cdot)$ of the original BVP (\ref{fds}), (\ref{bcs}). Let us represent the open set $\Omega \subset \mathbb{R}^{n}$ as the finite union of subsets $\Omega_{i}$:
\begin{align}
\label{union}
    \Omega = \cup_{i=1}^{N} \Omega_{i}.
\end{align}
In each subset $\Omega_{i}$, we pick a point $\tilde{\chi_{1}}^{i}$ and calculate the approximate solution $u_{m}(t, \tilde{\chi_{1}}^{i})$ using the recurrence formula (\ref{seq2}). Then we find the value of the determining function $\Delta_{m}(\tilde{\chi_{1}}^{i})$, according to (\ref{deteq}), and exclude from (\ref{union}) subsets $\Omega_{i}$ for which the inequality does not hold. According to Theorem \ref{thm5}, these subsets cannot contain a point $\chi_{1}^{*}$ that determines the solution $u(\cdot)$. The remaining subsets $\Omega_{i_{1}}, ..., \Omega_{i_{s}}$ form a set $\Omega_{m,N}$, such that only $\tilde{\chi_{1}} \in \Omega_{m,N}$ can determine $u(\cdot)$. \\
As $N, m \rightarrow \infty$, the set $\Omega_{m,N}$ "follows" the set $\Omega^{*}$, which may contain a value $\chi_{1}^{*}$ and defines a solution to the BVP (\ref{fds}), (\ref{bcs}). Each point $\tilde{\chi_{1}}$ can be seen as an approximation of $\chi_{1}^{*}$, which determines solution of the BVP (\ref{fds}), (\ref{bcs}). It is clear that 
\begin{align*}
    \lvert \tilde{\chi_{1}} - \chi_{1}^{*}\rvert \leq \sup_{\chi_{1} \in \Omega_{m,N}} \lvert \tilde{\chi_{1}} - \chi_{1}\rvert ,
\end{align*}
and the function $u_{m}(t, \tilde{\chi_{1}})$, calculated using the iterative formula (\ref{seq2}), can be seen as an approximate solution to the BVP (\ref{fds}), (\ref{bcs}).\\
\end{remark} 

\begin{theorem}\label{thm6} Suppose the conditions of Theorem 1 are satisfied and a point $\chi_{1}^{*}$, defined in the set $\Omega$, is the solution of the exact determining equation (\ref{deteq}), and $\tilde{\chi_{1}}$ is an arbitrary point in the set $\Omega_{m,N}$. Then the following estimate holds: 
\begin{align*}
    \lvert u_{\infty}(t, \chi_{1}^{*}) - u_{m}(t, \tilde{\chi_{1}})\rvert \leq& Q^{m}M(I_{n}-Q)^{-1}\alpha_{1}(t)\\
    &+ \sup_{\tilde{\chi_{1}}\in \Omega_{m,N}} \Big(R + R\alpha_{1}(t)(I_{n}-Q)^{-1} + Q^{m} \Big)\lvert \chi_{1}^{*} - \tilde{\chi_{1}}\rvert.
\end{align*}
\end{theorem}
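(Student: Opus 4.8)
The plan is to bound the quantity by the triangle inequality, inserting the $m$-th iterate evaluated at the \emph{exact} parameter $\chi_{1}^{*}$ as an intermediate term:
\begin{align*}
    \lvert u_{\infty}(t, \chi_{1}^{*}) - u_{m}(t, \tilde{\chi_{1}})\rvert \leq \lvert u_{\infty}(t, \chi_{1}^{*}) - u_{m}(t, \chi_{1}^{*})\rvert + \lvert u_{m}(t, \chi_{1}^{*}) - u_{m}(t, \tilde{\chi_{1}})\rvert.
\end{align*}
The two resulting terms are controlled, respectively, by the convergence rate of the iteration at a \emph{fixed} parameter (Theorem~\ref{thm1}) and by the dependence of a \emph{fixed} iterate on the parameter (the proof of Lemma~\ref{lem4}), so no new analytic estimate is needed.

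For the first term I would invoke the error estimate established in the proof of Theorem~\ref{thm1}. Although the displayed bound (\ref{est}) is stated with the constant $\frac{T^{p}}{2^{2p-1}\Gamma(p+1)}$, the induction in that proof in fact yields the sharper, $t$-dependent estimate
\begin{align*}
    \lvert u_{\infty}(t, \chi_{1}) - u_{m}(t, \chi_{1})\rvert \leq Q^{m}(I_{n}-Q)^{-1}M\,\alpha_{1}(t),
\end{align*}
before $\alpha_{1}(t)$ is relaxed to its supremum over $[0,T]$. Evaluating at $\chi_{1}=\chi_{1}^{*}$ and reordering the factors $Q^{m}$, $(I_{n}-Q)^{-1}$ and the scalar $\alpha_{1}(t)$ gives exactly the first summand $Q^{m}M(I_{n}-Q)^{-1}\alpha_{1}(t)$ of the claimed bound.

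For the second term I would use the parameter-Lipschitz estimate for the finite iterate derived inside the proof of Lemma~\ref{lem4}, namely
\begin{align*}
    \lvert u_{m}(t, \chi_{1}^{0}) - u_{m}(t, \chi_{1}^{1})\rvert \leq \Big[ R + R\alpha_{1}(t)(I_{n}-Q)^{-1} + Q^{m}\Big]\lvert \chi_{1}^{0} - \chi_{1}^{1}\rvert.
\end{align*}
Setting $\chi_{1}^{0}=\chi_{1}^{*}$ and $\chi_{1}^{1}=\tilde{\chi_{1}}$, and then bounding $\lvert \chi_{1}^{*}-\tilde{\chi_{1}}\rvert$ by its supremum over $\Omega_{m,N}$ (which is legitimate since $\tilde{\chi_{1}}\in\Omega_{m,N}$ is arbitrary, exactly as in the Remark preceding the theorem), reproduces the second summand. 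Adding the two bounds closes the argument.

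I expect the only genuine difficulty to be bookkeeping rather than a conceptual obstacle. Two points must be handled with care: one has to draw on the sharp $\alpha_{1}(t)$-form of the convergence estimate rather than the relaxed constant appearing in (\ref{est}), so that the $t$-dependence matches the stated right-hand side; and one must keep track of the componentwise ordering of the nonnegative factors $Q^{m}$, $(I_{n}-Q)^{-1}$, $M$ and $R$, which the paper writes in varying orders. Since both pieces follow immediately from estimates already proved in Theorem~\ref{thm1} and Lemma~\ref{lem4}, the proof requires no further analytic work.
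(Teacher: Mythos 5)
Your proposal coincides with the paper's own proof: the same triangle-inequality split through the intermediate term $u_{m}(t,\chi_{1}^{*})$, with the first summand controlled by the $\alpha_{1}(t)$-dependent form of the convergence estimate from Theorem~\ref{thm1} and the second by the finite-$m$ parameter-Lipschitz bound established inside the proof of Lemma~\ref{lem4}, followed by taking the supremum over $\Omega_{m,N}$. If anything you are more careful than the paper, which cites the displayed estimate (\ref{est}) directly even though, as you note, it is the sharper $t$-dependent bound from the induction (and the pre-limit estimate of Lemma~\ref{lem4}, not its stated $u_{\infty}$ version) that is actually being used.
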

\begin{proof}
Let us use the following inequality:
\begin{align*}
      \lvert u_{\infty}(t, \chi_{1}^{*}) - u_{m}(t, \tilde{\chi_{1}})\rvert \leq& \lvert u_{\infty}(t, \chi_{1}^{*}) - u_{m}(t, \chi_{1}^{*})\rvert + \lvert u_{m}(t, \chi_{1}^{*}) -  u_{m}(t, \tilde{\chi_{1}})\rvert.
\end{align*}
According to the estimate in (\ref{est}), we have 
\begin{align*}
    \lvert u_{\infty}(t, \chi_{1}^{*}) - u_{m}(t, \chi_{1}^{*})\rvert  \leq& Q^{m}(I_{n}-Q)^{-1}M\alpha_{1}(t).
\end{align*}
Moreover, from the estimate in Lemma \ref{lem4}, it follows that
\begin{align*}
    \lvert u_{m}(t, \chi_{1}^{*}) -  u_{m}(t, \tilde{\chi_{1}})\rvert \leq& \Big( R + R\alpha_{1}(t)(I_{n}-Q)^{-1} + Q^{m}\Big) \lvert \chi_{1}^{*} - \tilde{\chi_{1}}\rvert.
\end{align*}
Therefore, we find
\begin{equation*}
\begin{gathered}
    \lvert u_{\infty}(t, \chi_{0}^{*}) - u_{m}(t, \tilde{\chi_{1}})\rvert \\
    \leq \frac{Q^{m}}{I_n-Q}M\alpha_{1}(t) + \Big( R + \frac{R\alpha_{1}(t)}{I_n-Q} + Q^{m}\Big) \lvert \chi_{1}^{*} - \tilde{\chi_{1}}\rvert \\
    \leq \frac{Q^{m}}{I_n-Q}M\alpha_{1}(t) + \sup_{\tilde{\chi_{1}}\in \Omega_{m,N}} \Big(R + \frac{R\alpha_{1}(t)}{I_n-Q} + Q^{m} \Big)\lvert \chi_{1}^{*} - \tilde{\chi_{1}}\rvert,
    \end{gathered}
\end{equation*}
as required.
\end{proof}

In the following section we apply the numerical-analytic technique to a particular model example.

\section{Example}\label{sec5}

Motivated by \cite{marynets1}, we consider the BVP for the fractional differential equation
\begin{align}
\label{egeq}
 {^C_{0}}{D}^{\frac{3}{2}}_{t} u(t) =  \frac{-2e^{t}}{(1+e^{t})^{2}}u(t) - \frac{2 \omega e^{t}(1-e^{t})}{(1+e^{t})^{3}}\,\, ( := f(t, u(t))),
\end{align}
subject to the Dirichlet boundary conditions 
\begin{align}
\label{egbcs}
u(0) = 1,\,\,\,u(1) = 2.
\end{align}
Here $\omega$ is a scalar which in the context of the  flow of the Antarctic Circumpolar Currect corresponds to the dimensionless Coriolis parameter being equal to  $4649.56$. 

Let the BVP (\ref{egeq}), (\ref{egbcs}) be defined on the domain
\begin{align*}
    D := \{ u: 1 \leq u \leq 2\}, \,\,\,\,\, t \in [0, 1].
\end{align*}
Since $u : [0, 1] \rightarrow D \subset \mathbb{R}$, the constant vector  $M$ and matrices $K$ and $Q$, defined by (\ref{bounded}), (\ref{Lipschitz}), and (\ref{matrix}), respectively, are now scalars. We have
\begin{align*}
    M = 844.11, \,\,\,\,
    K = \frac{1}{2}, \,\,\,\, \beta = \frac{1}{3\sqrt{\pi}},\,\,\,\, Q = \frac{1}{6\sqrt{\pi}},
\end{align*}
thus, the condition of nonemptiness of the set $D_{\beta}$ is satisfied. Since ${Q< 1}$, $f(t, u(t))$ is bounded and satisfies a Lipschitz condition with constant $K$, conditions (\ref{bounded}) - (\ref{spectrad}) are satisfied. Hence, we can apply the numerical-analytic procedure derived in Sec. 2 - 4 to the present problem.

\medskip
\indent For the BVP (\ref{egeq}), (\ref{egbcs}), the approximate determining equation reads
\begin{align}
\label{apprdeteq}
 \Delta_{m}(\chi_{1})  = \frac{(1-\chi_{1})\sqrt{\pi}}{2}+\int_{0}^{1}(1-s)^{1/2}f(s, u_{m}(s, \chi_{1}))ds = 0,
\end{align}
and the sequence of approximations takes the form 

\begin{align}
\begin{split}
\label{um}
    u_{m}(t, \chi_{1}) =& 1 + \chi_{1}t + (1-\chi_{1})t^{3/2}+\frac{1}{\Gamma(3/2)} \int_{0}^{t}(t-s)^{1/2}f(s,u_{m-1}(s, \chi_{1}))ds \\
    &- \frac{1}{\Gamma(3/2)}t^{3/2}\int_{0}^{1}(1-s)^{1/2}f(s,u_{m-1}(s,\chi_{1}))ds,
    \end{split}
    \end{align}
    \begin{align}
    \label{u0}
    u_{0}(t, \chi_{1})  =& 1 + \chi_{1}t +(1-\chi_{1})t^{3/2},
\end{align}

\noindent where $m \in \mathbb{Z}^{+}$, $t \in [0, 1]$.\\

In order to obtain the approximate value of the parameter $\chi_{1} \in \Omega := [-333, -320]$, Eq. (\ref{apprdeteq}) is solved at each iteration step. At the initial step $m =0$ $u_{0}(t, \chi_{1})$, as given in (\ref{u0}), is substituted into the expression for $\Delta_{0}(\chi_{1})$, which yields
\begin{align*}
    \Delta_{0}(\chi_{1}^{0}) = \frac{(1-\chi_{1}^{0})\sqrt{\pi}}{2}+\int_{0}^{1}(1-s)^{1/2}f(s, u_{0}(s, \chi_{1}^{0}))ds,
\end{align*}
where 
\begin{align*}
    f(s, u_{0}(s, \chi_{1}^{0}))     =&     \frac{-2e^{s}[1+\chi_{1}^{0}s + (1-\chi_{1}^{0})(s-1)^{3/2}]}{(1+e^{s})^{2}}-\frac{2 \omega e^{s}(1-e^{s})}{(1+e^{s})^{3}}.
\end{align*}
The approximate determining equation
\begin{align*}
    \Delta_{0}(\chi_{1}^{0}) = 0
\end{align*}
is solved numerically to obtain $\chi_{1}^{0} = -320.68$. Thus, the initial approximation to the solution of BVP (\ref{egeq}), (\ref{egbcs}) is given by
\begin{align*}
    u_{0}(t, \chi_{1}^{0})=& 1 - 320.68t + 321.68t^{3/2}.
\end{align*}
At the next step, $m=1$, the expression for $u_{0}( t, \chi_{1})$ is used to construct the next approximation:
\begin{align*}
     u_{1}(t, \chi_{1}) =& 1 +\chi_{1}t + (1-\chi_{1})t^{3/2}  +\frac{1}{\Gamma(3/2)} \int_{0}^{t}(t-s)^{1/2}f(s,u_{0}(s, \chi_{1}))ds \\
      &- \frac{1}{\Gamma(3/2)}t^{3/2}\int_{0}^{1}(1-s)^{1/2}f(s,u_{0}(s,\chi_{1}))ds,\\
\end{align*}
which is substituted into $\Delta_{1}(\chi_{1})$:
\begin{align*}
    \Delta_{1}(\chi_{1}^{1})  = \frac{(1-\chi_{1}^{1})\sqrt{\pi}}{2}+\int_{0}^{1}(1-s)^{1/2}f(s, u_{1}(s, \chi_{1}^{1}))ds = 0.
\end{align*}
The approximate determining equation 
\begin{align*}
    \Delta_{1}(\chi_{1}^{1}) = 0
\end{align*}
is solved again to find $\chi_{1}^{1} = -332.06$. With the obtained value for $\chi_{1}^{1}$, the first approximation becomes
\begin{align*}
     u_{1}(t, \chi_{1}^{1}) =& 1 - 332.06t + 333.06t^{3/2}  +\frac{1}{\Gamma(3/2)} \int_{0}^{t}(t-s)^{1/2}f(s,u_{0}(s, \chi_{1}^{1}))ds \\
      &- \frac{1}{\Gamma(3/2)}t^{3/2}\int_{0}^{1}(1-s)^{1/2}f(s,u_{0}(s,\chi_{1}^{1}))ds,\\
\end{align*}
where 
\begin{align*}
    f(s, u_{0}(s, \chi_{1}^{1}))     =&     \frac{-2e^{s}[1-332.06s + 333.06(s-1)^{3/2}]}{(1+e^{s})^{2}}-\frac{2 \omega e^{s}(1-e^{s})}{(1+e^{s})^{3}}.
\end{align*}
Similarly, $u_{1}(t, \chi_{1})$ is used to construct $u_{2}(t, \chi_{1})$:
\begin{align*}
     u_{2}(t, \chi_{1}) =& 1 +\chi_{1}t + (1-\chi_{1})t^{3/2}  +\frac{1}{\Gamma(3/2)} \int_{0}^{t}(t-s)^{1/2}f(s,u_{1}(s, \chi_{1}))ds \\
      &- \frac{1}{\Gamma(3/2)}t^{3/2}\int_{0}^{1}(1-s)^{1/2}f(s,u_{1}(s,\chi_{1}))ds,\\
\end{align*}
which is substituted into $\Delta_{2}(\chi_{1})$ and the approximate determining equation $\Delta_{1}(\chi_{1}^{2}) = 0$ is solved to obtain $\chi_{1}^{2} = -332.30$. This value is substituted into the expression for $u_{2}(t, \chi_{1})$:
\begin{align*}
       u_{2}(t, \chi_{1}^{2}) =& 1 - 332.30t + 333.30t^{3/2}  +\frac{1}{\Gamma(3/2)} \int_{0}^{t}(t-s)^{1/2}f(s,u_{1}(s, \chi_{1}^{2}))ds \\
      &- \frac{1}{\Gamma(3/2)}t^{3/2}\int_{0}^{1}(1-s)^{1/2}f(s,u_{1}(s,\chi_{1}^{2}))ds.\\
\end{align*}

Figure~\ref{fig:1} shows plots of the first 3 approximations. In addition, we verified how well the calculated approximations satisfy the original FDE (\ref{fds}) by calculating the Caputo derivative of $u_{m}(t, \chi_{1}^{m})$ and comparing to the right-hand side $f(t, u_{m}(t, \chi_{1}^{m})) $ for $m = 0, 1, 2$. The plots are shown in Figures~\ref{fig:2}-\ref{fig:4}.
\begin{figure}[h!]
    \centering
    \includegraphics[width=0.9\linewidth]{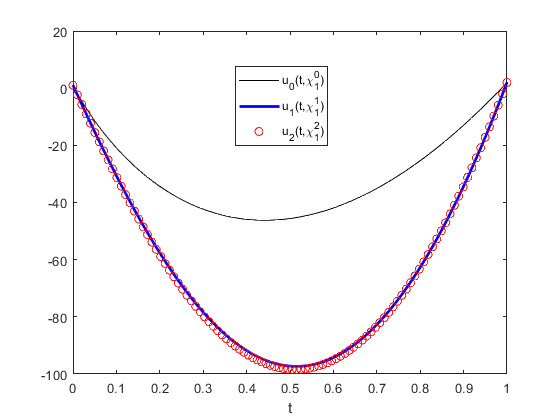}
    \caption{Numerical-analytic approximations to the solution of BVP (\ref{egeq}), (\ref{egbcs}) for $m =0,\,1,\,2$}
    \label{fig:1}
\end{figure}

\begin{figure}[h!]
\centering
        \includegraphics[width=0.92\linewidth]{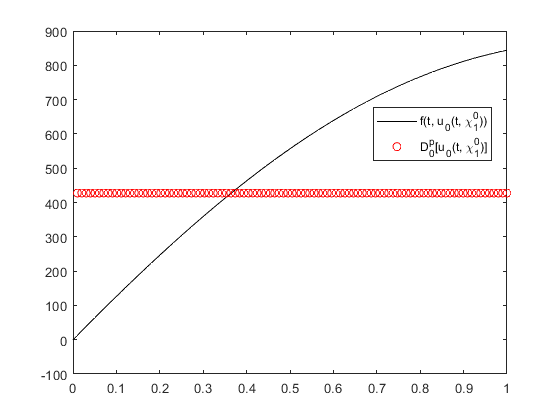}
        \caption{Right-hand side of BVP (\ref{egeq}), (\ref{egbcs}) (solid black line) and the approximations to the solution of the BVP  (solid red line) for $m=0$} \label{fig:2}
\end{figure}
\begin{figure}[h!]
\centering
        \includegraphics[width=0.92\linewidth]{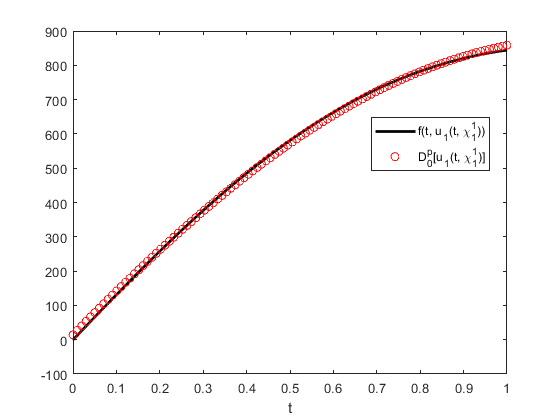}
        \caption{Right-hand side of BVP (\ref{egeq}), (\ref{egbcs}) (solid line) and the approximations to the solution of the BVP  (drawn with dots) for $m=1$} \label{fig:3}
\end{figure}
\begin{figure}[h!]
        \centering
        \includegraphics[width=0.92\linewidth]{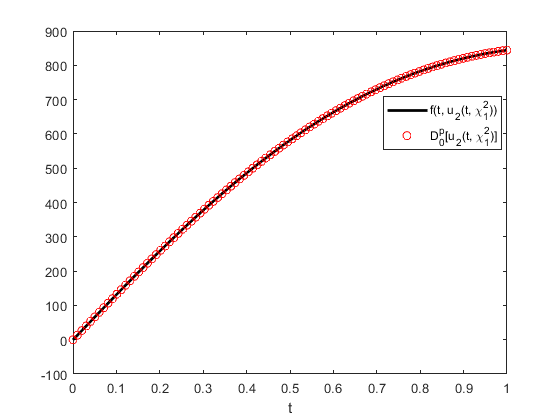}
        \caption{Right-hand side of BVP (\ref{egeq}), (\ref{egbcs}) (solid line) and the approximations to the solution of the BVP  (drawn with dots) for $m=2$} \label{fig:4}
\end{figure}

As it can be seen from our results, already on  the second iteration step we obtain a very good approximation to the exact solution of the original BVP \eqref{egeq}, \eqref{egbcs}. If necessary, this process can be continued even further and a better precision of computations can be obtained.  

\section{Conclusion}

\,\,\,\,\,\,\, Approximation methods are necessary for constructing approximate solutions to BVPs for which the exact solutions are not available. In this paper we use the numerical-analytic approximation technique to study a system of nonlinear FDEs of the Caputo type, subjected to the Dirichlet boundary conditions. We construct a sequence of functions and prove its uniform convergence to a limit function which is the exact solution to the IVP for the modified system of equations. We give necessary and sufficient conditions for the limit function to also satisfy the original BVP, and for the existence of solutions to the BVP.  \\
\indent The technique is applied to the equation modelling the motion of a gyre in the Southern hemisphere in the fractional setting. The approximate determining equation is solved numerically to obtain values of the unknown parameter, which are used to calculate the first three terms of the sequence. To verify the validity of the constructed approximations, we have checked how well they satisfy the original FDE.

The developed technique and existence results can be further extended and applied to more complex fractional BVPs.

\section*{Authors' contributions} 

Both authors have contributed equally to this paper.

\bibliographystyle{unsrt}
\bibliography{ConstApprox-MP-2022}

\begin{thebibliography}{10}

\bibitem{podbulny}
I.~Podbulny.
\newblock {\em Fractional differential equations}.
\newblock Academic Press, New {Y}ork, 1st edition, 1999.

\bibitem{Herrmann}
R.~Herrmann.
\newblock {\em Fractional Calculus: An Introduction For Physicists}.
\newblock World Scientific Publishing Com, 2011.

\bibitem{kilbas}
A.~Kilbas, H.~Srivastava, and J.~Trujillo.
\newblock {\em Theory and applications of fractional differential equations}.
\newblock Elsevier, Amsterdam, {T}he {N}etherlands, 1st edition, 2006.

\bibitem{ronsam}
M.~Ronto and A.~M. Samoilenko.
\newblock {\em Numerical-analytic methods in the theory of boundary-value
  problems}.
\newblock World Scientific Publishing Co., Inc., River {E}dge:, 2000.

\bibitem{Marynets2}
K.~Marynets and M.~Feckan.
\newblock Approximation approach to periodic {BVP} for fractional differential
  systems.
\newblock {\em Eur. Phys. J. Special Topics}, 226:3681--3692, 2017.

\bibitem{ConJohn}
Constantin A. and Johnson~R. S.
\newblock An exact, steady, purely azimuthal ow as a model for the antarctic
  circumpolar current.
\newblock {\em J. Phys. Oceanogr.}, 46:3585--3594, 2016.

\bibitem{Q}
R.~Quirchmayr.
\newblock A steady, purely azimuthal flow model for the antarctic circumpolar
  current.
\newblock {\em Monatsh Math}, 187:565–572, 2018.

\bibitem{marynets1}
K.~Marynets.
\newblock On a two-point boundary value problem in geophysics.
\newblock {\em Applicable Analysis}, 98:553--560, 2019.

\bibitem{Marynets}
K.~Marynets.
\newblock On the {C}auchy-{N}icoletti type two-point boundary-value problem for
  fractional differential systems.
\newblock {\em Differential Equations and Dynamical Systems}, pages 1--21,
  2020.

\bibitem{survey}
J.~Wang, M.~Feckan, and Y.~Zhou.
\newblock A survey on impulsive fractional differential equations.
\newblock {\em Fract. Calc. Appl. Anal.}, 19:806--831, 2016.

\bibitem{RontoMarynets}
M.~I. Ronto and K.~Marynets.
\newblock On the parametrization of boundary-value problems with two-point
  nonlinear boundary conditions.
\newblock {\em Nonlinear Oscil}, 14:379--413, 2012.

\bibitem{farkas}
M.~Farkas.
\newblock {\em Periodic motions}.
\newblock Applied Mathematical Sciences, London, 1994.

\end{thebibliography}


\end{document}